\newtheorem{theorem}{Theorem}[section]    
\newtheorem{definition}[theorem]{Definition}
\newtheorem{proposition}[theorem]{Proposition}
\newtheorem{lemma}[theorem]{Lemma}    
\newtheorem{corollary}[theorem]{Corollary}
\newtheorem{conjecture}[theorem]{Conjecture}
\newtheorem{remark}[theorem]{Remark}
\numberwithin{equation}{section}
\newcommand*{\defeq}{\mathrel{\vcenter{\baselineskip0.5ex \lineskiplimit0pt
                     \hbox{\scriptsize.}\hbox{\scriptsize.}}}%
                     =}
\newcommand{\apl}{A_{PL}}
\newcommand{\relm}{(A\otimes A \otimes \Lambda Z,D)}
\newcommand{\relmi}{A\otimes A \otimes \Lambda Z }
\newcommand{\ata}{A\otimes A}
\newcommand{\lvd}{\ahat}
\newcommand{\lvlv}{\ahat\otimes \ahat}
\newcommand{\lvvzd}{(\ahat \otimes \ahat \otimes \Lambda Z, D)}
\newcommand{\bq}{\mathbb{Q}}
\newcommand{\eqaa}{\simeq_{\ata}}
\newcommand{\ssn}{ss^{-n}}
\newcommand{\smn}{s^{-n}}
\newcommand{\bs}{\setminus}
\newcommand{\ahat}{\hat{A}}
\def\commutatif{\ar@{}[rd]|{\circlearrowleft}}
\def\pullback{\ar@{}[rd]|{pb}}
\def\comhotopie{\ar@{}[rd]|{\sim}}
\newcommand{\oid}{\operatorname{id}}
\newcommand{\qi}{\stackrel{\simeq}\longrightarrow}
\title{Rational model of the configuration space of two points in a simply connected closed manifold}
\author{Hector Cordova Bulens}
\begin{document}
%
\maketitle
\begin{abstract}
Let $M$ be a simply connected closed manifold of dimension $n$. We study the rational homotopy type of the configuration space of 2 points in $M$, $F(M,2)$. When $M$ is even dimensional, we prove that the rational homotopy type of $F(M,2)$ depends only on the rational homotopy type of $M$. When the dimension of $M$ is odd, for every $x\in H^{n-2} (M, \bq)$, we construct a commutative differential graded algebra $C(x)$. We prove that for some $x \in H^{n-2} (M; \bq)$, $C(x)$ encodes completely the rational homotopy type of $F(M,2)$. For some class of manifolds, we show that we can take $x=0$.

\end{abstract}

\section{Introduction}
Let $M$ be a simply connected closed manifold of dimension $n$. The configuration space of two points in $M$ is the space 
\[F(M,2)= \{(x,y) \in M\times M : x\neq y \}= M\times M \bs \Delta(M)\]
where $\Delta \colon M \hookrightarrow M\times M$ is the diagonal embedding. We have an obvious inclusion $F(M,2) \hookrightarrow M\times M$.  

Our goal in this paper is to study the rational homotopy type of $F(M,2)$. Recall that by the theory of Sullivan, the rational homotopy type of a simply connected space is encoded by a commutative differential graded algebra (CDGA for short), which is called a \emph{rational model} of the space. By Poincar\'e duality of the manifold, $M$ admits a Poincar\'e duality CDGA model $(A,d)$ (see Section \ref{Section 2}). There exists an element called the \emph{diagonal class} $\Delta \in (\ata)^n$ generalizing the classical diagonal class in $H^*(M; \bq) \otimes H^*(M; \bq)$.

 In \cite{LSRHT} it is shown that $\ata / (\Delta)$, where $(\Delta)$ is the ideal generated by $\Delta$ in $\ata$, is a CDGA model of $F(M,2)$ when $M$ is at least \emph{$2$-connected}. This results implies that the rational homotopy type of $F(M,2)$ depends only on the rational homotopy type of $M$ for a $2$-connected closed manifold. On the other side, Longoni and Salvatore \cite{LSFM2} have constructed an example of two connected (but not simply connected) closed manifolds that are homotopy equivalent but such that their configuration spaces of 2 points are not, even rationally. 

The goal of this paper is to discuss the rational homotopy type of $F(M,2)$ where $M$ is $1$-connected. When the dimension of $M$ is \emph{even} we show that (see Theorem \ref{T:CDGA_FM2_even}), as in the $2$-connected case, the CDGA $\ata / (\Delta)$ is a rational model of $F(M,2)$. Actually we show that $\ata \to \ata/ (\Delta)$ is a CDGA model of the inclusion $F(M,2) \hookrightarrow M\times M$. 

The \emph{odd} dimension case is more complicated. For any element $x \in H^{n-2}(M; \bq)$ we will construct a CDGA $C(x)$ and a map 
$\ata \to C(x).$
One of the main results of this paper (see Theorem \ref{T:CDGA_FM2_odd} and Corollary \ref{C:CDGA_FM2_odd}) is that for \emph{some} $x \in H^{n-2}(M; \bq)$, this map is a rational model of $F(M,2) \hookrightarrow M\times M$. 
Note that, for $x=0$, $C(0)$ is equivalent to the CDGA $\ata/ (\Delta)$; when $x\neq 0$, $C(x)$ is a multiplicatively twisted version of such a quotient (see Definition \ref{d:cxi}).  

In Section \ref{model_s2xs3} we introduce the notion of an \emph{untwisted manifold}. We will say that a manifold $M$ is \emph{untwisted} if $C(0)$ is a CDGA model of $F(M,2)$ (see Definition \ref{d:untwisted}).  In \cite{LSRHT} it is shown that all 2-connected closed manifolds are untwisted and in Section \ref{s:CDGA_FM2_even} we will show that every simply connected closed manifold of even dimension is untwisted. We prove in Section \ref{model_s2xs3} that the product of two untisted manifolds is untwisted. It is still an open question to know if all manifolds are untwisted. 

\subsection*{Acknowledgements} I would like to thank Yves Felix for suggesting that Proposition \ref{p:exemple} could be obtained from example $A=(H^*(S^2\times S^3; \bq),0)$.  I also thank my advisor Pascal Lambrechts for making this work possible through his advice and encouragement.

%
%

\section{Basic notions}\label{Section 2}
In this section we review the definition of a mapping cone of a module map and we describe a CGA structure on it. We also recall the definition and properties of a Poincar\'e Duality CDGA. 
In this paper we will use the usual tools of rational homotopy theory as developped for example in \cite{FHT}.
\subsection{Mapping cones}
\label{s:mc}
Let $R$ be a CDGA and let $A$ be an $R$-dgmodule. We will denote by $s^k A$ the k-th suspension of $A$ defined by $(s^k A)^n \cong A^{n+k}$ as a vector space and with an  $R$-dgmodule structure defined by $r\cdot (s^k a)= (-1)^{k|r|}s^k (r\cdot a)$ and $d(s^k a)= (-1)^k s^k (da)$ for $a\in A$ and $r\in R$.  
If $f \colon B \to A$ is an $R$-dgmodule morphism, \emph{the mapping cone} of $f$ is the $R$-dgmodule 
\[C(f)\defeq (A\oplus_f sB, \delta)\]
defined by $A\oplus sB$ as an $R$-dgmodule and $\delta (a,sb) = (d_A (a) +f(b), -sd_B(b))$. This $R$-dgmodule can be equipped with a commutative graded algebra (CGA) structure, that respects the $R$-dgmodule structure, characterised by the fact that $(s b) \cdot (sb')=0$, for $b,b' \in B$. Precisely, we have  the multiplication 
\[\mu : C(f)\otimes C(f) \to C(f) \ , \ c_1 \otimes c_2 \mapsto c_1 \cdot c_2, \]
such that, for homogeneous elements $a,a' \in A$ and $b,b' \in B$. 
\begin{itemize}
\item[$(i)$] $ \mu (a\otimes a')$ = $a\cdot a'$ ,\vspace{1mm}
\item[$(ii)$] $\mu (a \otimes sb')= (-1)^{|a|} s(a\cdot b') $ ,\vspace{1mm}
\item[$(iii)$] $\mu (sb \otimes a')= (-1)^{|b| |a'|} s(a'\cdot b) $ ,\vspace{1mm}
\item[$(iv)$] $\mu(sb \otimes sb')= 0$. 
\end{itemize}
 We will call this structure the \emph{semi-trivial structure} on the mapping cone.
\subsection{Poincar\'e duality CDGA}

A \emph{Poincar\'e duality} CDGA of formal dimension $n$ is a triple $(A,d, \epsilon)$ such that 
\begin{itemize}
\item $(A,d)$ is a CDGA;
\item $\epsilon \colon A^n \to \bq$ is a linear map such that $\epsilon (dA^{n-1})=0$ (one can think of $\epsilon$ as an orientation of the Poincar\'e duality CDGA $A$);
\item For each $k\in \mathbb{Z}$ 
\[\begin{array}{ccc}
A^k \otimes A^{n-k} &\to& \bq \\
a\otimes b & \mapsto & \epsilon (a\cdot b)
\end{array}\] is non degenerate, i.e., if $a\in A^k$ and $a\neq0$ then there exists $b\in A^{n-k}$ such that $\epsilon (a\cdot b)\neq 0$. 
\end{itemize}
 Let $\{a_i\}_{i=1}^N$ be an homogeneous basis of $A$. There exists a \emph{Poincar\'e dual} basis $\{a_j^*\}_{j=1}^N$ characterised by the fact that $\epsilon (a_i \cdot a^*_j)=\delta_{ij}$.

One of the main results concerning this algebras is the following one. 
\begin{theorem}{\cite[Theorem 1.1]{LSPDCDGA}}
Let $(A,d)$ be a CDGA such that $H^*(A,d)$ is a simply-connected Poincar\'e duality algebra of formal dimension $n$. Then there exists $(A',d')$ a Poincar\'e duality CDGA of formal dimension $n$  weakly equivalent to $(A,d)$.
\end{theorem}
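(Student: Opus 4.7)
The plan is to produce $A'$ in two stages: first replace $A$ by a quasi-isomorphic CDGA concentrated in degrees $0, \ldots, n$, and then modify it so that the top-degree multiplication pairing becomes non-degenerate.

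First I would pick a minimal Sullivan model $(\Lambda V, d)$ of $(A,d)$; by the hypothesis on $H^*(A,d)$ we may take $V^{\leq 1} = 0$. Next I would truncate above degree $n$. Since $H^{>n}(\Lambda V, d) = 0$ by Poincar\'e duality of the cohomology, every cocycle in degree $>n$ is a coboundary, and one can iteratively quotient by a complement of $d((\Lambda V)^n)$ in $(\Lambda V)^{n+1}$ and then by all of $(\Lambda V)^{\geq n+2}$ without breaking the CDGA structure (products landing in degrees $>n$ simply become zero). This yields a quasi-isomorphic CDGA $B$ with $B^{>n} = 0$ and $B^n \twoheadrightarrow H^n(B) \cong \bq$. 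The truncation step is standard but requires some bookkeeping to remain in the CDGA category.

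The second stage is the heart of the argument. Fix a cocycle $\omega \in B^n$ with $[\omega]$ generating $H^n(B)$, and define the orientation $\epsilon\colon B^n \to \bq$ as the composite $B^n \twoheadrightarrow H^n(B) \cong \bq$. Consider
\[ K \defeq \{ a \in B : \epsilon(a \cdot b) = 0 \text{ for all } b \in B \}. \]
A short computation using the Leibniz rule together with $\epsilon \circ d = 0$ shows that $K$ is a differential ideal of $B$: for $a \in K$ and $b,c\in B$ one has $\epsilon((da)b) = \pm \epsilon(a(db))=0$ and $\epsilon((ac)b) = \epsilon(a(cb))=0$. Setting $A' \defeq B/K$ with the induced orientation then yields a CDGA whose top-degree pairing is non-degenerate by construction, hence a Poincar\'e duality CDGA of formal dimension $n$.

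The main obstacle, and where I expect the real work to be, is proving that the projection $B \twoheadrightarrow A'$ is a quasi-isomorphism, equivalently that the ideal $K$ is acyclic. If $a \in K$ is a cocycle, Poincar\'e duality on $H^*(B)$ forces $[a] = 0$ in $H^*(B)$, so $a = d b'$ for some $b' \in B$; but a priori there is no reason for $b'$ to lie in $K$. One must correct $b'$ by a cocycle $z$, selected using the non-degeneracy of the cohomology pairing, so that $b' - z$ still satisfies $d(b' - z) = a$ and lies in $K$. Organising this correction coherently seems to require an inductive argument along a suitable increasing filtration of $B$ (for instance a word-length filtration inherited from $\Lambda V$), and carrying the induction through, together with the delicate control in stage one needed to avoid destroying Poincar\'e duality while truncating, is where I expect the hard part to be.
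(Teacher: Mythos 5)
First, a point of calibration: the paper itself contains no proof of this statement --- it is quoted verbatim from Lambrechts--Stanley \cite{LSPDCDGA} --- so your attempt has to be measured against the proof in that reference. Your skeleton (truncate a simply connected minimal model above the formal dimension, then divide by the ideal $K$ of elements that pair trivially into degree $n$ --- the ``orphans'' of \cite{LSPDCDGA}) is exactly the starting point of that proof, and your stage one is essentially fine, modulo a small slip: quotienting $(\Lambda V)^{n+1}$ by a complement of $d((\Lambda V)^n)$ leaves $B^{n+1}=d((\Lambda V)^n)\neq 0$, contradicting your claim $B^{>n}=0$; the standard truncation instead quotients $(\Lambda V)^n$ by a complement of the cocycles, together with all of $(\Lambda V)^{>n}$. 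It is also true, and easy, that $B/K$ with the induced orientation is always a Poincar\'e duality CDGA.

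However, the step you yourself flag as ``the real work'' is a genuine gap, and the repair you sketch cannot work as stated. Your correction replaces $b'$ by $b'-z$ with $z$ a \emph{cocycle} chosen, via non-degeneracy of the cohomology pairing, so that $\epsilon((b'-z)c)=0$ for all cocycles $c$. Since coboundaries are in particular cocycles, this controls the pairing of $b'-z$ only against $\ker d$, whereas membership in $K$ requires $\epsilon((b'-z)c)=0$ for \emph{every} $c\in B$, and in the relevant degrees $\ker d\neq B$ whenever $d\neq 0$. Any further admissible correction is again by a cocycle (it must not change $d(b'-z)=a$), and chasing the remaining freedom (e.g.\ replacing $z$ by $z+dv$, which changes the pairing against non-closed $c$ by $\pm\epsilon(v\, dc)$) shows that the existence of a valid correction is \emph{equivalent} to the orphan cocycle $a$ bounding inside $K$, i.e.\ to $H(K)=0$ in that degree --- the very thing to be proved. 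And this simply fails for a general truncated model $B$: the orphan ideal need not be acyclic, so no inductive correction internal to the fixed algebra $B$, along a word-length filtration or otherwise, can succeed. This is precisely why the proof in \cite{LSPDCDGA} is long: rather than proving $K$ acyclic for the given $B$, Lambrechts and Stanley construct a new CDGA quasi-isomorphic to $B$ by an inductive sequence of extensions, attaching new generators in pairs of complementary degrees so as to kill the homology of the orphan ideal from the bottom up, with a separate and delicate analysis near the middle dimension; only for the suitably modified algebra is the projection onto the quotient by orphans a quasi-isomorphism. The missing idea in your proposal is thus that acyclicity of the orphan ideal must be \emph{engineered} by changing the algebra up to quasi-isomorphism, not established for the algebra one starts with.
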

\noindent
As a direct consequence of this result we have that  every simply connected closed manifold admits a Poincare duality CDGA model.

\section{A dgmodule model of $F(M,2)$}

The following result is an evident reformulation of the results of \cite[Theorem 10.1]{LSRDG} and \cite{LSRHT}. 

Let $(A,d, \epsilon)$ be a Poincar\'e duality CDGA of formal dimension $n$. Let $\{a_i\}_{i=1}^N$ be a homogeneous basis of $A$ and denote by $\{a_i^*\}_{i=1}^N$ its Poincar\'e dual basis. Denote by 
\[\Delta := \sum_{i=1}^{N} (-1)^{|a_i|} a_i\otimes a_i^* \in (\ata)^n\]
the diagonal class in $\ata$. There is an obvious $\ata$-module structure on $s^{-n}A$ defined, for homogeneous elements $a,x,y \in A$,  by $(x\otimes y) \cdot s^{-n} a = (-1)^{(n|x| + n |y| + |a||y|)} s^{-n} x\cdot a \cdot y$

The map 
\[\Delta^! \colon \smn A \to \ata\]
defined by $\Delta^! (\smn a) = \Delta \cdot (1\otimes a)$ is an $\ata$-dgmodule map (see \cite[Proposition 5.1]{LSRHT}). Therefore the mapping cone $C(\Delta^!)=\ata \oplus_{\Delta^!} ss^{-n}A$ is an $\ata$-dgmodule.

As a direct consequence of \cite[Theorem 10.1]{LSRDG} we obtain the following result
\begin{proposition}
\label{p:dgmodule_model}
Let $(A,d,\epsilon)$ be a Poincar\'e duality model of $M$ of formal dimension $n$. Let $\ahat$ be a CDGA such that we have 
\[\xymatrix{(A,d) &\ar[l]_-{\simeq} \ahat\ar[r]^-{\simeq} & \apl(M)}\]
a zig-zag of CDGA quasi-isomorphisms. The CGDAs $A\otimes A$, $\apl(M\times M)$ and $\apl(F(M,2))$ inherit an  $\ahat\otimes \ahat$-dgmodule structure. Denote by $\Delta$ the diagonal class in $\ata$ and let  $\Delta^! \colon \smn A \to \ata$ be the morphism defined by $\Delta^! (\smn a) =\Delta(1\otimes a)$ for all $a\in A$. Then the mapping cone 
 \[C(\Delta^!)= \ata \oplus_{\Delta^!} ss^{-n}A\] 
is an $\ahat \otimes \ahat$-dgmodule weakly equivalent to $\apl(F(M,2))$. 
\end{proposition}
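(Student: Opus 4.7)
The plan is to deduce this proposition directly from \cite[Theorem 10.1]{LSRDG}, which provides an $\hat{A}_W$-dgmodule model of the complement $W\setminus f(N)$ of an embedding $f\colon N^m\hookrightarrow W^n$ between closed oriented manifolds, as the mapping cone of a shriek map $f^!\colon s^{m-n}A_N\to A_W$ between Poincar\'e duality CDGA models. I would specialise this to the diagonal embedding $\Delta\colon M\hookrightarrow M\times M$, whose complement is $F(M,2)$, with $m=n$ and ambient dimension $2n$, so that the codimension equals $n$.

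The setup first requires a Poincar\'e duality CDGA model of $M\times M$: by K\"unneth, $(\ata,d,\epsilon\otimes\epsilon)$ is a Poincar\'e duality CDGA of formal dimension $2n$, whose Poincar\'e dual basis is given (up to Koszul signs) by $\{a_i^*\otimes a_j^*\}$. Sullivan functoriality shows that the multiplication $\mu\colon \ata\to A$ is a CDGA model of the diagonal map $\Delta$, so the hypotheses of \cite[Theorem 10.1]{LSRDG} are met. It then remains to identify the abstract shriek map of that theorem with the concrete map $\Delta^!$ of the statement. Since $\Delta^!$ is characterised, up to $\ata$-dgmodule homotopy, as being Poincar\'e dual to $\mu$, and since the diagonal class $\Delta=\sum_i(-1)^{|a_i|}a_i\otimes a_i^*$ satisfies
\[(\epsilon\otimes\epsilon)\bigl(\Delta\cdot(a_j\otimes a_k)\bigr)=\epsilon(a_j\cdot a_k),\]
one verifies that sending $\smn a\mapsto \Delta\cdot(1\otimes a)$ realises the shriek map concretely; that this formula defines an $\ata$-dgmodule chain map is already recorded in \cite[Proposition 5.1]{LSRHT}.

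Combining these identifications, \cite[Theorem 10.1]{LSRDG} immediately yields that $C(\Delta^!)=\ata\oplus_{\Delta^!}\ssn A$ is an $\ahat\otimes\ahat$-dgmodule quasi-isomorphic to $\apl(F(M,2))$, which is the statement of the proposition. The only delicate point I expect to encounter is the careful sign bookkeeping needed to match the abstract shriek map of \cite{LSRDG} with the explicit formula $\Delta^!(\smn a)=\Delta\cdot(1\otimes a)$; once this matching is established, the remainder of the argument is a direct translation of the general result to the specific case of the diagonal embedding, which is why the author qualifies the proposition as an evident reformulation.
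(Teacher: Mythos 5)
Your deduction lands in essentially the same place as the paper: the paper gives no proof of Proposition \ref{p:dgmodule_model} beyond declaring it a direct consequence of \cite[Theorem 10.1]{LSRDG}, and your argument is likewise a reduction to that theorem. The substantive discrepancy is your description of what the cited theorem says. It is not a general complement theorem for an arbitrary embedding $f\colon N^m\hookrightarrow W^n$ of closed oriented manifolds; it is stated specifically for the configuration space $F(M,2)$ of a simply connected closed manifold, with the explicit shriek map $\Delta^!(\smn a)=\Delta\cdot(1\otimes a)$ and the $\ahat\otimes\ahat$-dgmodule structure already in place. The distinction matters: the general embedding-complement machinery of Lambrechts and Stanley (their Poincar\'e embedding paper, and the configuration space result of \cite{LSRHT}) carries unknotting/connectivity hypotheses stronger than simple connectivity of $M$ --- hypotheses that the diagonal embedding of a merely $1$-connected manifold need not satisfy, and which can even fail at the level of codimension when $n=2$. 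Removing those hypotheses for the diagonal is precisely the achievement of \cite{LSRDG}, so if your route through a purported general theorem were taken literally, the argument would have a gap exactly at the point where the hypotheses of that general theorem must be checked.

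That said, the mathematical content of your ``glue'' steps is correct and is in fact internal to the proof in \cite{LSRDG}: $(\ata,\epsilon\otimes\epsilon)$ is a Poincar\'e duality CDGA of formal dimension $2n$ by K\"unneth; the multiplication $\mu\colon\ata\to A$ models the diagonal map through the zig-zag induced by $\ahat$; shriek maps $\smn A\to\ata$ are determined up to $\ata$-dgmodule homotopy by their cohomology class, and the identity $(\epsilon\otimes\epsilon)\bigl(\Delta\cdot(a\otimes b)\bigr)=\pm\,\epsilon(ab)$ (your formula holds only up to Koszul signs in the odd-degree case) identifies $\Delta^!$ as the dual of $\mu$. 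These verifications are simply not needed to deduce the proposition once the cited theorem is quoted accurately, which is why the author calls the statement an evident reformulation rather than giving an argument. You should correct the attribution: cite the general Poincar\'e embedding results only for context, and invoke \cite[Theorem 10.1]{LSRDG} as a statement already about $F(M,2)$ and $\Delta^!$ for $1$-connected closed $M$.
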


We will often use the following result.  
\begin{lemma}{ \cite[Lemma 5.2]{LSRHT}}
Let $(A,d,\epsilon)$ be a Poincar\'e duality CDGA. The mapping cone $C(\Delta^!)=\ata \oplus_{\Delta^!} \ssn A$ equipped with the semi-trivial structure is a CDGA .
\end{lemma}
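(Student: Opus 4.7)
The plan is to verify the Leibniz rule $\delta(xy) = \delta(x) y + (-1)^{|x|} x \delta(y)$ on the mapping cone, since Section~\ref{s:mc} already records that the semi-trivial structure gives a commutative graded algebra for any dgmodule map. Recall that $\delta(a) = d_{\ata}(a)$ for $a \in \ata$ and $\delta(sb) = \Delta^!(b) - sd(b)$ for $sb \in \ssn A$. I would proceed by case analysis on whether $x$ and $y$ lie in $\ata$ or in $\ssn A$.

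The case $x, y \in \ata$ is immediate from the Leibniz rule in $\ata$. For the mixed case, say $x \in \ata$ and $y = sb$, expand
\[\delta(x \cdot sb) = (-1)^{|x|} \delta(s(x\cdot b)) = (-1)^{|x|}\bigl(\Delta^!(x\cdot b) - s \, d(x\cdot b)\bigr),\]
and use the two properties that $\Delta^!$ is an $\ata$-dgmodule map, namely $\Delta^!(x \cdot b) = x \cdot \Delta^!(b)$ and $\Delta^! \circ d_{\smn A} = d_{\ata} \circ \Delta^!$, together with the Leibniz rule for the $\ata$-dgmodule $\smn A$. A short sign check matches the result against $\delta(x) \cdot sb + (-1)^{|x|} x \cdot \delta(sb)$; the opposite mixed case then follows by commutativity.

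The hardest case is $x = sb$, $y = sb'$. Since the semi-trivial product vanishes on such pairs, $\delta(xy) = 0$, and the contributions of $sd(b)$ and $sd(b')$ to $\delta(sb) \cdot sb' + (-1)^{|sb|} sb \cdot \delta(sb')$ also vanish by semi-triviality. What remains is the identity
\[\Delta^!(b) \cdot sb' + (-1)^{|sb|} sb \cdot \Delta^!(b') = 0,\]
which is the main obstacle. Applying the semi-trivial formulas turns this into a claim of the form $\Delta^!(b) \cdot b' = \pm \Delta^!(b') \cdot b$ in $\smn A$. Writing $b = \smn c$, $b' = \smn c'$ and expanding $\Delta \cdot (1\otimes c)$ via the module action $(u\otimes v) \cdot \smn a = (-1)^{n|u|+n|v|+|a||v|} \smn(u\cdot a \cdot v)$, both sides produce sums over the basis $\{a_i\}$ of terms $\smn(a_i \cdot c' \cdot a_i^* \cdot c)$ and $\smn(a_i \cdot c \cdot a_i^* \cdot c')$. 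The cancellation then reduces to the graded symmetry of the diagonal class under the twist automorphism of $\ata$ sending $u \otimes v$ to $(-1)^{|u||v|} v\otimes u$, which is itself a consequence of Poincar\'e duality: the pairing $(a,b) \mapsto \epsilon(a \cdot b)$ is symmetric up to the sign $(-1)^{|a||b|}$, so swapping factors in $\sum_i (-1)^{|a_i|} a_i \otimes a_i^*$ recovers the same expression. Beyond this sign book-keeping the argument is entirely formal.
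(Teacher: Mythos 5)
Your argument is correct, and it is essentially the canonical one: this paper does not actually prove the lemma (it is imported verbatim from \cite[Lemma 5.2]{LSRHT}), and the proof there runs exactly along your lines, with all the content concentrated in the case of two suspended elements, where the Leibniz rule reduces to the identity $\Delta^!(b)\cdot sb' + (-1)^{|sb|}\, sb\cdot \Delta^!(b')=0$ and hence to a symmetry property of the diagonal class. One point to tighten in your last step: the twist does not literally ``recover the same expression'' --- rather $\tau(\Delta)=(-1)^n\Delta$ (already for $A=H^*(S^3;\bq)$ one has $\Delta=1\otimes\omega-\omega\otimes 1$), but this causes no harm: expanding both sides over the basis, the sign relating the coefficients of $\smn(a_i\, x'\, a_i^*\, x)$ has exponent $n+n|x|+n|x'|+|x||x'|\equiv (n+|x|)(n+|x'|)\pmod 2$, independent of the index $i$, which is exactly the sign the cancellation requires, so the extra $(-1)^n$ is absorbed and your proof goes through for every formal dimension $n$.
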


This suggests that $C(\Delta^!)$ equipped with the semi-trivial structure (see Section \ref{s:mc}) is a natural candidate to be a CDGA model of $F(M,2)$. The main result of \cite{LSRHT} proves that it is when the manifold is 2-connected. In the following section we show that it also is when the manifold is 1-connected and of even dimension.

\section{Rational model of $F(M,2)$ for a simply connected manifold of even dimension}
\label{s:CDGA_FM2_even}

Let $M$ be  a simply connected closed manifold of even dimension $n$. We will prove the following result:

\begin{theorem}
\label{T:CDGA_FM2_even}
Let $M$ be a simply connected closed manifold of even dimension $n$. Let $A$ be a 1-connected Poincar\'e duality model of $M$ of formal dimension $n$ and denote by $\Delta$ the diagonal class in $\ata$. Let $\Delta^! \colon \smn A \to \ata$ be the morphism such that $\Delta^!(\smn a)= \Delta (1\otimes a)$ for $a\in A$. The mapping cone of $\Delta^!$
\[C(\Delta^!)= \ata\oplus_{\Delta^!} \ssn A\] 
 equipped with the semi-trivial structure is a CDGA and the map \[\oid \oplus 0 \colon \ata \to C(\Delta^!)\] defined by $\oid \oplus 0 (a\otimes b)= (a\otimes b , 0)$ is a CDGA  model of the inclusion $F(M,2)\hookrightarrow M\times M$.
\end{theorem}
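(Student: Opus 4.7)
My plan is to promote the dgmodule quasi-isomorphism $C(\Delta^!) \simeq \apl(F(M,2))$ furnished by Proposition \ref{p:dgmodule_model} to a CDGA quasi-isomorphism, doing so over $\ahat \otimes \ahat$ so that the resulting zig-zag models the inclusion $F(M,2) \hookrightarrow M\times M$. The natural strategy is to fix a relative Sullivan model
\[
\ahat \otimes \ahat \hookrightarrow \relmi \stackrel{\simeq}{\longrightarrow} \apl(F(M,2))
\]
of the inclusion, and to construct a CDGA morphism $\varphi \colon \relm \to C(\Delta^!)$ over $\ahat \otimes \ahat$ by induction on the Sullivan filtration of $Z$. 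Since source and target are already $\ahat \otimes \ahat$-dgmodules quasi-isomorphic to $\apl(F(M,2))$, any CDGA morphism one manages to construct over $\ahat \otimes \ahat$ is automatically a quasi-isomorphism, so the only real task is the inductive construction of $\varphi$.

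At each inductive step, for a new generator $z$, the equation $\delta \varphi(z) = \varphi(Dz)$ is solvable because $\varphi(Dz)$ is a cocycle whose cohomology class must vanish, by the dgmodule equivalence of Proposition \ref{p:dgmodule_model}. The delicate issue is enforcing multiplicativity: for generators $z_i, z_j$ whose images lie in the suspension summand $\ssn A$, the semi-trivial structure demands $\varphi(z_i)\varphi(z_j) = 0$, and any discrepancy with the actual product coming from $\apl(F(M,2))$ must be absorbed by readjusting the lifts $\varphi(z)$ within their cohomology classes. The obstruction to doing so is a secondary cocycle in $C(\Delta^!)$ whose class one must show to be trivial.

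The hypothesis that $n$ is even is used precisely to guarantee this vanishing. Elements of $\ssn A$ lie in degrees of the form $|a|+n-1$ for $a \in A$, and combined with $A^1 = 0$ (since $M$ is $1$-connected) and the Poincar\'e duality of $A$, this severely constrains where products of two such elements can land. A systematic bidegree count shows that for $n$ even every such product lies in a component of $C(\Delta^!)$ where the obstruction class is zero, either because the component itself is trivial or because the product is automatically a coboundary; the parity of $n$ enters both through the suspension shift $n-1$ and through the degree of the diagonal class $\Delta$. This is the main technical obstacle of the proof, playing the role here that the vanishing of $H^1$ and $H^2$ plays in the $2$-connected case of \cite{LSRHT}. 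For odd $n$ this vanishing fails in general, which is precisely what motivates the twisted CDGAs $C(x)$ introduced in the next section.

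Finally, once $\varphi$ is constructed as a CDGA quasi-isomorphism over $\ahat \otimes \ahat$, the commutativity of the evident square relating the inclusion $\ahat \otimes \ahat \hookrightarrow \relmi$ to the map $\oid \oplus 0 \colon \ata \to C(\Delta^!)$ exhibits the latter as a CDGA model of $F(M,2) \hookrightarrow M \times M$, as required.
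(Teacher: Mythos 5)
Your proposal defers its entire mathematical content to an unexecuted ``systematic bidegree count,'' and the one justification you do give for the vanishing of obstructions is circular. You assert that the obstruction cocycles vanish ``by the dgmodule equivalence of Proposition~\ref{p:dgmodule_model}'' --- but that dgmodule equivalence holds verbatim when $n$ is odd, where the multiplicative obstruction is genuinely nonzero: this is exactly the phenomenon behind Theorem~\ref{T:CDGA_FM2_odd} and Proposition~\ref{p:exemple} (for $S^2\times S^3$ the twisted structures $C(\xi)$ are pairwise inequivalent under $\ata$, and the twisting $\xi=\theta(u^2)-\tau^2$ is precisely the obstruction you are trying to kill). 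So no argument relying only on the dgmodule equivalence can succeed; a correct proof must isolate exactly where the parity of $n$ enters. The paper does this with two concrete devices your proposal never invokes: (i) since $H^{<n}(M\times M, F(M,2))=0$ and $H^{n}(M\times M,F(M,2);\bq)\cong\bq$, the relative Sullivan model can be chosen with $Z^{<n-1}=0$ and $Z^{n-1}=\bq\cdot u$; and (ii) one maps not into $C(\Delta^!)$ but into the quotient $C(\Delta^!)/I$ by the acyclic differential ideal $I=\langle\omega\otimes\omega,\, \ssn\omega\rangle$, which kills all degrees $\geq 2n-1$ (using $A^{n-1}=0$, forced by $A^1=0$ and Poincar\'e duality). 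With these in place no induction or obstruction theory is needed at all: the dgmodule lift $\theta\colon\lvvzd\to C(\Delta^!)$ exists in one stroke by cofibrancy in the dgmodule category (and, being a lift along a weak equivalence, is automatically a quasi-isomorphism), and $\pi\circ\theta$ is multiplicative by Lemma~\ref{module_adgc_pour_fm2} after checking a \emph{single} product: any $z\cdot z'$ with $|z|>n-1$ or $|z'|>n-1$ lands in degree $\geq 2n-1$, hence is zero in the quotient, and the sole remaining case $u\cdot u$ vanishes on both sides because $|u|=n-1$ is odd. That parity observation is the one and only place ``$n$ even'' is used. Note also that because your target is $C(\Delta^!)$ itself rather than the quotient, your bidegree count cannot close: products of lifts can land in degrees $2n-1$ and $2n$, where $\ssn\omega$ and $\omega\otimes\omega$ survive, and you give no mechanism for reconciling them with the semi-trivial products --- the paper sidesteps this entirely by inserting the wrong-way CDGA quasi-isomorphism $\pi\colon C(\Delta^!)\to C(\Delta^!)/I$ into the final zig-zag.

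A secondary but real error: your claim that ``any CDGA morphism one manages to construct over $\ahat\otimes\ahat$ is automatically a quasi-isomorphism'' because source and target are both quasi-isomorphic to $\apl(F(M,2))$ is false as stated; two objects weakly equivalent to a common third admit plenty of non-quasi-isomorphisms between them. The statement happens to be repairable here, but by a different argument you would need to supply: $H^*(\ata)\to H^*(C(\Delta^!))$ is surjective (the connecting map $H^*(\Delta^!)$ is injective, so the long exact sequence of the cone splits), hence any morphism under $\ahat\otimes\ahat$ is surjective on cohomology, hence bijective by finite dimensionality. In the paper this issue never arises, since the quasi-isomorphism property of $\theta$ comes for free from the lifting. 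Your final square also needs the normalization $\theta(1)=1$ (achievable after scaling by a nonzero rational) together with $\lvlv$-linearity to conclude $\theta(a\otimes b)=\left((\oid\oplus 0)\circ\rho\right)(a\otimes b)$; this is minor, but it is a step, not an ``evident'' commutativity.
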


A direct corollary of the theorem is the rational homotopy invariance of $F(M,2)$. Specifically:
\begin{corollary}
Let $M$ be a simply connected closed manifold of even dimension. The rational homotopy type of $F(M,2)$ is completely determined by the rational homotopy type of $M$.
\end{corollary}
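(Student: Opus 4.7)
The plan is to derive the corollary from Theorem \ref{T:CDGA_FM2_even} by showing that the CDGA $C(\Delta^!)$ is an invariant of the rational homotopy type of $M$, not merely of a chosen Poincar\'e duality model.

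First I would fix the setup. Suppose $M$ and $M'$ are two simply connected closed manifolds of the same even dimension $n$ with $\apl(M)\simeq \apl(M')$ as CDGAs (this is the meaning of ``same rational homotopy type''). By the Lambrechts--Stanley theorem recalled in Section \ref{Section 2}, each admits a Poincar\'e duality CDGA model: $(A,d,\epsilon)$ for $M$ and $(A',d',\epsilon')$ for $M'$, both $1$-connected of formal dimension $n$. Concatenating $A \simeq \apl(M)\simeq \apl(M')\simeq A'$ yields a zig-zag of CDGA quasi-isomorphisms between $A$ and $A'$, although the intermediate CDGAs need not carry a Poincar\'e duality structure. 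By Theorem \ref{T:CDGA_FM2_even}, $C(\Delta_A^!)$ and $C(\Delta_{A'}^!)$ are CDGA models of $F(M,2)$ and $F(M',2)$ respectively, so the corollary reduces to showing that whenever $A\simeq A'$ as CDGAs with both $1$-connected of Poincar\'e duality type and formal dimension $n$, the semi-trivial CDGAs $C(\Delta_A^!)$ and $C(\Delta_{A'}^!)$ are weakly equivalent.

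Along the zig-zag it is enough to treat a single CDGA quasi-isomorphism $\varphi: A\to A'$ between two such PD CDGAs. Since $H^n(A)\cong H^n(A')\cong \bq$ is one-dimensional, up to rescaling $\epsilon'$ by a nonzero scalar I may assume $\epsilon'\circ\varphi$ and $\epsilon$ agree on $A^n$ modulo $dA^{n-1}$. Under this normalization, the cohomology class $(\varphi\otimes\varphi)^*[\Delta_A]$ equals $[\Delta_{A'}]$ because the diagonal class is cohomologically characterized by the duality pairing together with the orientation. Hence there exists $\beta\in (A'\otimes A')^{n-1}$ with
\[(\varphi\otimes\varphi)(\Delta_A)-\Delta_{A'} = d\beta.\]
Using $\beta$ I would construct a morphism $\Phi: C(\Delta_A^!)\to C(\Delta_{A'}^!)$ by
\[\Phi(a\otimes b,0)=(\varphi(a)\otimes\varphi(b),0), \qquad \Phi(0,ss^{-n}x)=\bigl(\beta\cdot (1\otimes\varphi(x)),\ ss^{-n}\varphi(x)\bigr),\]
the correction term compensating for the failure of $\varphi\otimes\varphi$ to intertwine $\Delta_A^!$ with $\Delta_{A'}^!$ on the nose. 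Compatibility with the differential is immediate from the defining equation for $\beta$, and $\Phi$ being a quasi-isomorphism follows from the five lemma applied to the short exact sequences $0\to \ata\to C(\Delta^!)\to s s^{-n} A\to 0$ and their counterparts for $A'$, since the induced maps on $\ata$ and $s^{-n}A$ are quasi-isomorphisms by hypothesis.

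The hard part will be checking multiplicativity of $\Phi$ with respect to the semi-trivial structures, and showing that a primitive $\beta$ can be chosen so that the correction term $\beta\cdot(1\otimes\varphi(x))$ is compatible with the $\ata$-module action on both mapping cones. If the naive choice of $\beta$ fails, I would refine it by projecting onto the kernel of the multiplication map or by adjusting $\varphi$ up to CDGA homotopy, exploiting $1$-connectedness of $A'$ to kill low-degree obstructions. Once multiplicativity is established for a single $\varphi$, chaining weak equivalences along the zig-zag yields $\apl(F(M,2))\simeq C(\Delta_A^!)\simeq C(\Delta_{A'}^!)\simeq \apl(F(M',2))$, which is exactly the desired rational homotopy invariance.
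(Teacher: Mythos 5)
There are two genuine gaps here, one in your construction and one in the logical structure of the reduction. The step you defer as ``the hard part'' is in fact where your approach breaks: the map $\Phi$ you define is not multiplicative for the semi-trivial structures, and no choice of primitive $\beta$ will make it so in general. Concretely, $\Phi\bigl((a\otimes b)\cdot ss^{-n}x\bigr)$ carries the correction term $\pm\,\beta\cdot(1\otimes\varphi(axb))$, while $\Phi(a\otimes b,0)\cdot\Phi(0,ss^{-n}x)$ carries $\pm\,(\varphi(a)\otimes\varphi(b))\cdot\beta\cdot(1\otimes\varphi(x))$, and these differ unless $\beta$ behaves like an $\ata$-module central element; likewise $\Phi(0,ss^{-n}x)\cdot\Phi(0,ss^{-n}y)$ produces cross terms $\pm\,\beta(1\otimes\varphi(x))\cdot ss^{-n}\varphi(y)$ and a quadratic term in $\beta$, whereas $ss^{-n}x\cdot ss^{-n}y=0$. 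Your proposed repairs (projecting $\beta$, adjusting $\varphi$ up to homotopy) are gestures, not arguments; a strict comparison morphism of this shape is exactly the kind of thing one should expect to fail, which is why the paper's machinery routes everything through cofibrant relative Sullivan models. More importantly, the entire comparison of $C(\Delta_A^!)$ with $C(\Delta_{A'}^!)$ is unnecessary: Theorem \ref{T:CDGA_FM2_even} holds for \emph{any} $1$-connected Poincar\'e duality model of $M$, and if $\apl(M)\simeq\apl(M')$ then a single PD CDGA $A$ is simultaneously a model of $M$ and of $M'$ (concatenate the zig-zags). Applying the theorem twice with this one $A$ gives $\apl(F(M,2))\simeq C(\Delta^!)\simeq\apl(F(M',2))$ without ever constructing a map between two different mapping cones.

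The second gap is independent and is precisely the content of the paper's short proof: weakly equivalent Sullivan models determine the rational homotopy type only for simply connected (or nilpotent) spaces, so you must verify that $F(M,2)$ is simply connected before concluding anything about its rational homotopy type from a CDGA equivalence. Your write-up never addresses $\pi_1(F(M,2))$, and this is exactly where the hypothesis on $M$ enters --- the Longoni--Salvatore example cited in the introduction shows the corollary genuinely fails without it. The paper handles this by a transversality (general position) argument: for $n\geq 4$ the diagonal has codimension $n\geq 3$ in $M\times M$, so removing it does not change the fundamental group and $F(M,2)$ is simply connected; for $n=2$ one has $M\cong S^2$ and the statement is checked directly. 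With simple connectivity in place, the corollary follows immediately from the theorem by the one-model observation above, and none of your comparison machinery is needed.
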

\begin{proof}
Let $n= \dim M$. If $n=2$ then $M\cong S^2$ and the result is obvious. If $n\geq 4$, a transversality argument shows that, if $M$ is simply connected, the configuration space $F(M,2)$ is simply connected. Hence the result is a direct consequence of Theorem \ref{T:CDGA_FM2_even}.
\end{proof}

                                                                                                                                                                                                                                                                                                                                                                                                                                                                                                                                                                                                                                                                                                                                                                                                                                                                                                                                                                                                                                                                                                                                                                                                                                                                                                                                                                                                                                                                                                                                                                                                                                                                                                                                                                                                                                                                                                                                                                                                                                                                                                                                                                                                                                                                                                                                                                                                                                                                                                                                                                                                                                                                                                                                                                                                                                                                                                                                                                                                                                                                                                                                                                                                                                                                                                                                                                                                                                                                                                                                                                                                                                                                                                                                                                                                                                                                                                                                                                                                                                                                                                                                                                                                                                                              The rest of this section is devoted to the proof of Theorem \ref{T:CDGA_FM2_even}.                                                                                                                                                                                                                                                                                                                                                                                                                                                                                                                                                                                                                                                                                                                                                                                                                                                                                                                                                                                                                                                                                                                                                                                                                                                                                                                                                                                                                                                                                                                                                                                                                                                                                                                                                                                                                                                                                                                                                                                                                                                                                                                                                                                                                                                                                                                                                                                                                                                                                                                                                                                                                                                                                                                                                                                                                                                                                                                                                                                                                                                                                                                                                                                                                                                                                                                                                                                                                                                                                                                                                                                                                                                                                                                                                                                                                                                                                                                                                                                                                                                                                                                                                                                  
 For this we, use the following obvious result.                                                                                                                                                                                                                                                                                                                                                                                                                                                                                                                                                                                                                                                                                                                                                                                                                                                                                                                                                                                                                                                                                                                                                                                                                                                                                                                                                                                                                                                                                                                                                                                                                                                                                                                                                                                                                                                                                                                                                                                                                                                                                                                                                                                                                                                                                                                                                                                                                                                                                                                                                                                                                                                                                                                                                                                                                                                                                                                                                                                                                                                                                                                                                                                                                                                                                                                                                                                                                                                                                                                                                                                                                                                                                                                                                                                                                                                                                                                                                                                                                                                                                                                                                                                                                                                                                                                                                                                                                                                                                                                                                                                                                                                                                                                                                                                                                                                                                                                                                                                                                                                                                                                                                                                                                                                                                                                                                                                                                                                                                                                                                                                                                                                                                                                                                                                                                                                                                                                                                                                                                                                                                                                                                                                                                                                                                                
%

\begin{lemma} 
\label{module_adgc_pour_fm2}
Let $\rho \colon R \to Q$ be a CDGA morphism and $R\otimes \Lambda Z$ be a relative Sullivan algebra.  
Let $\varphi: R\otimes \Lambda Z \to Q$ be an $R$-dgmodules morphism such that $\varphi|_R= \rho$. The morphism $\varphi$ is a CDGA morphism if for all $z,z' \in \Lambda^+ Z$, $\varphi(z\cdot z')=\varphi(z)\cdot \varphi(z')$.
\end{lemma}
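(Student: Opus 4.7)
The plan is to deduce general multiplicativity of $\varphi$ from three ingredients already at hand: $\rho = \varphi|_R$ is a CDGA morphism (so multiplicative on $R$), $\varphi$ is $R$-linear, and $\varphi$ is multiplicative on pairs in $\Lambda^+ Z$. The strategy is a routine bilinearity reduction followed by a case split using the decomposition $\Lambda Z = \bq\cdot 1 \oplus \Lambda^+ Z$.

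First I would observe that, by bilinearity of both sides, it suffices to verify $\varphi(x\cdot y) = \varphi(x)\cdot \varphi(y)$ on elementary tensors $x = r\otimes w$ and $y = r'\otimes w'$ with $r,r'\in R$ and $w,w'\in \Lambda Z$. Using $\Lambda Z = \bq\cdot 1 \oplus \Lambda^+ Z$ I would then treat three cases. When $w = w' = 1$ the identity reduces to $\rho(rr') = \rho(r)\rho(r')$, which holds because $\rho$ is a CDGA morphism. When exactly one of $w,w'$ is $1$ (say $w'=1$), writing $r\otimes w = r\cdot (1\otimes w)$ via the $R$-action and applying $R$-linearity of $\varphi$ together with graded commutativity in $Q$ yields
\[
\varphi\bigl((r\otimes w)(r'\otimes 1)\bigr) = \varphi\bigl((rr')\cdot(1\otimes w)\bigr) = \rho(rr')\,\varphi(1\otimes w) = \varphi(r\otimes w)\,\varphi(r'\otimes 1),
\]
and symmetrically when $w = 1$.

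The only substantive case is $w,w'\in \Lambda^+Z$. Here the algebra structure on the relative Sullivan algebra gives
\[
(r\otimes w)\cdot(r'\otimes w') = (-1)^{|w||r'|}(rr')\cdot(1\otimes ww'),
\]
so $R$-linearity produces $\varphi\bigl((r\otimes w)(r'\otimes w')\bigr) = (-1)^{|w||r'|}\rho(r)\rho(r')\,\varphi(1\otimes ww')$. The hypothesis lets me replace $\varphi(1\otimes ww')$ by $\varphi(1\otimes w)\cdot\varphi(1\otimes w')$, and a final application of graded commutativity in $Q$ to move $\rho(r')$ past $\varphi(1\otimes w)$ brings the result into the form $\varphi(r\otimes w)\cdot\varphi(r'\otimes w')$.

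I do not expect a real obstacle: the argument is purely formal and the only care required is in tracking Koszul signs in the last case. Conceptually the lemma is just the statement that once $R$-linearity is in hand, multiplicativity of $\varphi$ is controlled entirely by its values on the $\Lambda Z$-factor.
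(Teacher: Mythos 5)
Your proof is correct. The paper offers no proof of this lemma at all --- it is introduced as ``the following obvious result'' --- and your argument is precisely the routine verification being left to the reader: reduce by bilinearity to elementary tensors, split via $\Lambda Z = \bq\cdot 1 \oplus \Lambda^{+}Z$, and track Koszul signs in the mixed and $\Lambda^{+}Z$ cases (your signs do check out, since the underlying algebra of a relative Sullivan algebra is the tensor product algebra, so $(r\otimes w)(r'\otimes w') = (-1)^{|w||r'|}rr'\otimes ww'$ holds even though the differential is twisted). The only points worth stating explicitly in a final writeup are the two remaining conditions for a CDGA morphism, both immediate from your hypotheses: $\varphi$ commutes with differentials because it is a morphism of $R$-\emph{dg}modules, and $\varphi(1)=\rho(1)=1$ because $\rho$ is a CDGA morphism.
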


Let us prove Theorem \ref{T:CDGA_FM2_even}.

\begin{proof}[Proof of theorem \ref{T:CDGA_FM2_even}]
Let $A$ be a 1-connected (i.e. $A^0=\bq$, $A^1=0$) Poincar\'e Duality model of $M$ of formal dimension $n$. 
Take $\ahat$ a CDGA such that we have a zig-zag
\[\xymatrix{(A,d) &\ar[l]_-{\simeq} \ahat \ar[r]^-{\simeq} & \apl(M)}\]
of CDGA quasi-isomorphisms. 

The CDGAs  $A$, $A\otimes A$, $\apl(M\times M)$ and $\apl(F(M,2))$ inherit an $\ahat\otimes \ahat$-dgmodule structure. Taking a relative Sullivan model of  $f\colon \ahat\otimes \ahat \to \apl(M\times M) \to~\apl(F(M,2)),$ we get the following commutative diagram
\[\xymatrix{\lvd \otimes \lvd \ar[r]\ar@{ >->}[dr] &\apl(M\times M) \ar[r] & \apl(F(M,2))\\
&(\ahat \otimes \ahat \otimes \Lambda Z, D). \ar[ur]_m^{\simeq}}\]
Since $H^{<n}(M\times M, F(M,2))=0$ and $H^{n} (M\times M, F(M,2) ; \bq)\cong \bq$ the vector space $Z$ can be taken such that $Z^{<n-1}=0$ and $Z^{n-1}\cong \bq$. Let $u$ denote a generator of $Z^{n-1}$.  By Proposition 
\ref{p:dgmodule_model}, $\apl(F(M,2))$ and $C(\Delta^!)$ are weakly equivalent as $\ahat\otimes \ahat$-dgmodules. Since $\lvvzd$ is a cofibrant $\ahat\otimes \ahat$-dgmodule we have a direct quasi-isomorphism of $\ahat\otimes \ahat$-dgmodules 
\[\theta\colon  \lvvzd \stackrel{\simeq}\longrightarrow C(\Delta^!).\]

Denote by $\omega \in A^n$ a generator of the vector space $A^n\cong \bq$. The differential ideal $I= \langle \omega \otimes \omega, ss^{-n}\omega\rangle \subset C(\Delta^!)$ is acyclic and the projection 
$\pi  \colon C(\Delta^!) \to C(\Delta^!) / I$
is a CDGA quasi-isomorphism. Therefore the composition 
\[\varphi\defeq \pi \circ \theta \colon \lvvzd \to C(\Delta^!)/I\]
is a quasi-isomorphism of $\lvlv$-dgmodules. To show that it is a quasi-isomorphism of CDGA it suffices to show, by Lemma \ref{module_adgc_pour_fm2}, that for all $z,z' \in \Lambda^+ Z$:

\begin{equation}
\label{alg}
\varphi(z\cdot z')=  \varphi (z)\varphi( z').
\end{equation}
Recall that $Z^{<n-1}=0$,
 \begin{itemize}
\item If $|z|> n-1$ or $|z'|>n-1$ then the degree of expression \eqref{alg}  is $\geq 2n-1$, hence both terms are zero since $(C(\Delta^!)/I)^{\geq 2n-1}=~0$.
\item If $|z|=n-1$ and $|z'|=n-1$, by linearity and since $Z^{n-1}=\bq\cdot u$ it suffices to look at the case $z=z'=u$. Since $n-1$ is odd, by graded commutativity, $u^2=0$ and $\varphi(u^2)=\varphi(0)=\varphi(u)\cdot\varphi(u)=0.$  
\end{itemize}
So, we have the following zig-zag of CDGA quasi-isomorphisms, 
\[\xymatrix{\apl(F(M,2))&\ar[l]^{\simeq}_{m}\lvvzd \ar[r]_-{\simeq}^-{\pi \circ \theta} & C(\Delta^!) /I & \ar[l]^-{\simeq}_-{\pi} C(\Delta^!),}\]
which proves that $C(\Delta^!)$ is a rational model of $F(M,2)$.

Multiplying by a non zero rational number we can suppose that $\theta(1)=1$ and since $\theta$  is an $\lvlv$-dgmodules map we have that for $a, b \in \ahat$ 
\[\theta (a\otimes b) = (a\otimes b)\cdot \theta(1)= \rho(a\otimes b).1=\left((\oid\oplus 0) \circ \rho\right)(a\otimes b).\]
In other words, the following diagram commutes 
\[\xymatrix{\ahat\otimes \ahat\ar[d] \ar[r]^{\rho} & \ata \ar[d]^{\oid\oplus 0} \\
\lvvzd \ar[r]^-{\theta} & C(\Delta^!),}\]
and proves that the morphism $\oid\oplus 0 \colon \ata \to C(\Delta^!)$ is a CDGA model morphism of \linebreak$\apl(M\times M) \to \apl(F(M,2))$.
\end{proof}

In \cite[Lemma 5.4 and Lemma 5.5]{LSRHT} it is shown that, if $(\Delta)$ denotes the ideal generated by $\Delta \in \ata$, $\frac{\ata}{(\Delta)}$ is a CDGA quasi-isomorphic to the CDGA $C(\Delta^!)$ endowed with the semi-trivial structure. 

\begin{corollary}
\label{corollary_ata}
Let $M$ be a 1-connected closed manifold of even dimension. Let $A$ be a $1$-connected Poincar\'e duality CDGA model of $M$. Then \[\ata \to \frac{\ata}{(\Delta)}\] is a CDGA model of $F(M,2)\hookrightarrow M\times M$.
\end{corollary}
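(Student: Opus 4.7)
The proof will be a direct combination of the theorem just established with the already known comparison between the two candidate models. The plan is the following. Theorem \ref{T:CDGA_FM2_even} tells us that
\[\oid \oplus 0 \colon \ata \longrightarrow C(\Delta^!)\]
is a CDGA model of the inclusion $F(M,2) \hookrightarrow M \times M$. On the other hand, the sentence preceding the corollary recalls that by Lemmas 5.4 and 5.5 of \cite{LSRHT} there is a natural CDGA quasi-isomorphism
\[\pi \colon C(\Delta^!) \stackrel{\simeq}\longrightarrow \frac{\ata}{(\Delta)}\]
when $C(\Delta^!)$ is equipped with the semi-trivial structure. So it will suffice to show that the natural projection $\ata \to \ata/(\Delta)$ is obtained from the map $\oid \oplus 0$ by post-composition with $\pi$.

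Concretely, I would unwind the construction of $\pi$ from \cite{LSRHT}: it sends an element $(a\otimes b, 0) \in C(\Delta^!)$ to the class $[a \otimes b] \in \ata/(\Delta)$, and it sends any suspension term $(0, s\smn a)$ to $0$ (which is consistent because $\Delta^!(\smn a) = \Delta \cdot (1\otimes a) \in (\Delta)$, so the image of $\Delta^!$ is killed after projection). With this description, the triangle
\[\xymatrix{
\ata \ar[r]^-{\oid\oplus 0} \ar[dr]_-{\mathrm{proj}} & C(\Delta^!) \ar[d]^{\pi} \\
& \ata/(\Delta)
}\]
commutes strictly on the nose, so $\mathrm{proj}$ and $\oid\oplus 0$ model the same map in the homotopy category of CDGAs. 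Composing the zig-zag of quasi-isomorphisms from Theorem \ref{T:CDGA_FM2_even} with $\pi$ then shows that $\ata \to \ata/(\Delta)$ is a CDGA model of $F(M,2)\hookrightarrow M\times M$.

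There is essentially no serious obstacle here; the only point requiring a line of care is making sure that the comparison map $\pi$ really fits into the triangle above (rather than merely existing abstractly as a CDGA quasi-isomorphism). This is why one should take the specific $\pi$ described in \cite[Lemma 5.4]{LSRHT}, which is built by sending the mapping cone's $\ata$-summand identically to itself and killing the suspension summand, so that commutativity of the triangle is tautological.
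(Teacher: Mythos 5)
Your proposal is correct and matches the paper's (implicit) argument exactly: the corollary is stated there as an immediate consequence of Theorem \ref{T:CDGA_FM2_even} together with \cite[Lemmas 5.4 and 5.5]{LSRHT}, which is precisely your combination. Your extra care in checking that the quasi-isomorphism $\pi\colon C(\Delta^!)\to \ata/(\Delta)$ (identity on the $\ata$-summand, zero on the suspension, well defined since $\operatorname{im}\Delta^!\subset(\Delta)$) makes the triangle under $\ata$ commute strictly is a worthwhile detail the paper leaves tacit, but it is the same route, not a different one.
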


\section{Rational model of $F(M,2)$ for a simply connected manifold of odd dimension}
\label{s:odd}
Let $M$ be a closed, simply connected manifold of odd dimension $n$. Let $A$ be a Poincar\'e duality model of $M$. The construction of a rational model for $F(M,2)$ in this case is more complicated. To construct such a model we will introduce dgmodules weakly equivalent to $C(\Delta^!)$  that admit multiplicative structures other than the semi-trivial structure used above. Explicitly, for every $\xi \in (\ata)^{2n-2}$ we will construct a CDGA $C(\xi)$ where the multiplicative structure depends on $\xi$ (see Definition \ref{d:cxi}). We will show that one of these CDGAs $C(\xi)$ is the rational model of the configuration space $F(M,2)$. Later on we will discuss the fact that the choice of such a multiplicative structure is determined by the choice of an element in $H^{n-2}(M; \bq)$. 

\subsection{The algebra $C(\xi)$}

Let $A$ be a 1-connected Poincar\'e duality CDGA of formal dimension $n$ odd. Denote by $\Delta$ the diagonal class in $\ata$. 
 Recall that the morphism \[\Delta^! \colon \smn A \to \ata \ ; \ \smn a \mapsto \Delta (1\otimes a)\] is an  $\ata$-dgmodule morphism. Hence, the mapping cone \[C(\Delta^!)=\ata \oplus_{\Delta^!} \ssn A\] is an $\ata$-dgmodule.  

We will show subsequently that a quotient of the mapping cone $C(\Delta^!)$ by an acyclic $\ata$-subdgmodule admits other CDGA structures than the semi-trivial one.
 It is easy to see that the $\ata$-subdgmodule $(C(\Delta^!))^{\geq 2n-1}=\langle \omega \otimes \omega, \ssn \omega \rangle\subset C(\Delta^!) $ is acyclic. Hence, the quotient  \[C:= C(\Delta^!)/C(\Delta^!)^{\geq 2n-1}\] is an $\ata$-dgmodule and the canonical projection $\pi: C(\Delta^!)\to C$ is a quasi-isomorphism of $\ata$-dgmodules.

Let us define a multiplication over $C$ such that it becomes a CDGA. We want this multiplication to be compatible with the $\ata$-dgmodule structure, namely that the composition
\[\xymatrix{\ata \ar@{^{(}->}[r]& C(\Delta^!)\ar[r]^-\pi &C}\]
is a CDGA morphism. 

Since $C$ is the quotient of  $\ata \oplus_{\Delta^!} ss^{-n}A$ by the vector space $\langle \omega\otimes \omega, ss^{-n} \omega\rangle$, we have an obvious isomorphism
\[C\cong (\ata)^{<2n} \oplus_{\Delta^!} ss^{-n}(A^{<n}).\]
To define a multiplication over $C$ :
\[\mu : C\otimes C \to C \ , \ c_1 \otimes c_2 \mapsto c_1 \cdot c_2, \]
it suffices to define the products
\begin{itemize}
\item[$(i)$] $(a\otimes b) \cdot (a'\otimes b'),$ \vspace{1mm}
\item[$(ii)$] $(a\otimes b) \cdot ss^{-n}x'$ and $ss^{-n} x \cdot (a'\otimes b'),$\vspace{1mm}
\item[$(iii)$] $(ss^{-n}x)\cdot (ss^{-n}x'),$
\end{itemize}
\vspace{1mm}
for $a\otimes b$, $a'\otimes b' \in (\ata)^{<2n}$ and $x,x' \in A^{<n}$ homogeneous elements. \vspace{1mm}

The products $(i)$ and $(ii)$ are completely determined by the compatibility with the $\ata$-module structure and the graded commutativity. Specifically :
\begin{itemize}
\item[$(i)$] $(a\otimes b)\cdot (a' \otimes b')=(-1)^{|a'||b|} a\cdot a' \otimes b\cdot b'$ 
\vspace{1mm}
\item[$(ii)$] $(a\otimes b)\cdot ss^{-n} x'=(-1)^{(n-1)(|a|+|b|)+|b||x|}ss^{-n}a\cdot x' \cdot b$ \vspace{1mm}
\\ and $(ss^{-n}x)(a'\otimes b')=(-1)^{|a'||x|}ss^{-n}a'\cdot x \cdot b'.$
\end{itemize}
\vspace{1mm}
For $(iii)$, note that, if $|x| + |x'| \geq 1$, then for degree reasons 
\vspace{1mm}
\begin{itemize}
\item[$(iii)$] $(ss^{-n}x)\cdot  (ss^{-n}x') =0$.
\end{itemize}
\vspace{1mm}
Hence, the only still undefined product  is \vspace{1mm}
\begin{itemize}
\item[$(iv)$] $(ss^{-n}1)\cdot (ss^{-n}1) \in C^{2n-2}=(\ata)^{2n-2}$.
\end{itemize}
\medskip

The following result shows that if $n$ is odd, the product $ss^{-n}1 \cdot ss^{-n}1$ can be arbitrarily chosen in $(\ata)^{2n-2}$.
\begin{proposition}
Let $\xi \in (\ata)^{2n-2}$. If $n$ is odd there exists a CDGA structure on $C$ compatible with the $\ata$-dgmodule structure such that  \[ss^{-n}1\cdot ss^{-n}1=\xi.\]
\end{proposition}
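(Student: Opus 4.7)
The plan is to define the multiplication $\mu\colon C\otimes C\to C$ by the formulas (i)--(iv) above, setting $(ss^{-n}1)(ss^{-n}1):=\xi$ in (iv), and then verify that this gives a CDGA structure compatible with the $\ata$-dgmodule structure. Formulas (i)--(iii) are forced by bilinearity, graded commutativity, and the module axioms (combined with the fact that $C^{\geq 2n-1}=0$), so the only free datum is indeed $\xi\in(\ata)^{2n-2}$.

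First I would check graded commutativity. The identities on pairs coming from (i), (ii) are inherited from $\ata$ and the module action, and on pairs of the form $(ss^{-n}x)(ss^{-n}x')$ with $|x|+|x'|\geq 1$ both sides vanish by degree. The only genuinely new equation is $\xi=(-1)^{(n-1)^2}\xi$, which holds because $n$ is odd, hence $n-1$ is even.

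Next I would check associativity on triples of generators. When at most one factor is an $ss^{-n}$-element, associativity reduces to that of $\ata$ together with the module axiom. When two or more factors are $ss^{-n}$-elements, the product of any two already lies in $C^{\geq 2n-2}$, so multiplying once more by a factor of positive degree lands in $C^{\geq 2n-1}=0$; the remaining scalar cases involve $\xi\cdot ss^{-n}1$ or $ss^{-n}1\cdot\xi$, both of total degree $3n-3\geq 2n-1$, so also vanish on both sides.

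The main obstacle, and where the oddness of $n$ is essential, is the Leibniz rule for the new product (iv). For all other products, Leibniz passes to the quotient from the semi-trivial CDGA structure on $C(\Delta^!)$. For (iv), I would expand
\[ d\xi = d(ss^{-n}1)\cdot ss^{-n}1 + (-1)^{n-1}ss^{-n}1\cdot d(ss^{-n}1), \]
note that $d(ss^{-n}1)=\Delta$ so the right-hand side has total degree $2n-1$ and is zero in $C$, and observe that the left-hand side is likewise zero in $C$ because $d\xi\in(\ata)^{2n-1}\subset C^{2n-1}=0$. The crucial point is that the truncation $C^{\geq 2n-1}=0$ makes the Leibniz constraint on $\xi$ vacuous, which is precisely what allows $\xi$ to be chosen arbitrarily in degree $2n-2$ when $n$ is odd.
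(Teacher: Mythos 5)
Your proof is correct and follows essentially the same route as the paper: define the product by the formulas (i)--(iv) with $(ss^{-n}1)^2=\xi$, then verify associativity, commutativity and the Leibniz rule directly, using the truncation $C^{\geq 2n-1}=0$ to kill every problematic term (the paper's Leibniz case analysis and degree arguments match yours). One small mislabel: the oddness of $n$ is essential for \emph{commutativity} (your equation $\xi=(-1)^{(n-1)^2}\xi$), not for the Leibniz rule on the new product, which as you yourself show is vacuous by truncation for either parity --- but since you verified both points correctly, this is only a matter of phrasing, not a gap.
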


%
%
\begin{proof} 
Endow $C$ with the multiplicative structure described above setting that $(\ssn 1)^2=\xi.$  Let us show that this multiplication endows $C$ with a CDGA structure.
\begin{itemize}
\item Associativity:
For $x,y,z\in A$, for degree reasons
\[ (ss^{-n} x\cdot ss^{-n} y)\cdot ss^{-n}z=0= ss^{-n} x\cdot ( ss^{-n} y\cdot ss^{-n}z).\] 
For $x, y \in A$ and $a\otimes b\in \ata$ such that $|a\otimes b|> 0$ then $|(a\otimes b)| + |ss^{-n}x| + |ss^{-n} y|\geq 2n-1$. Therefore, for degree reasons \[(a\otimes b)\cdot (ss^{-n}x\cdot ss^{-n}y)=(a\otimes b \cdot ss^{-n}x)\cdot ss^{-n}y=0.\]
In the other cases the associativity is granted by the $\ata$-dgmodule structure of $C$.

\item Commutativity: The multiplicative structure is defined in such a way that the multiplication is commutative.
\item The Leibniz rule for the differential:
Denote by $\bar{\delta}$ the differential in $C$.
Let us show that for $c$, $c'$ homogeneous elements in $C$,  
\begin{equation}
\label{Leib}
\bar\delta( c\cdot c')-  (\bar\delta (c)\cdot c')-(-1)^{|c|}(c\cdot \bar\delta (c'))=0.
\end{equation}
\begin{itemize}
\item If $c, c' \in \ata$ then (\ref{Leib}) is true since $\ata$ is a CDGA and $\bar{\delta}|_{\ata}=d|_{\ata}$. 
\item If $c\in \ata$ and $c' \in ss^{-n} A$ then the left term of equation (\ref{Leib}) is zero since $\bar{\delta}$ is a differential of $\ata$-dgmodules. 
\item If $c\in ss^{-n} A$ and $c'\in \ata$, (\ref{Leib}) is established by the commutativity of the multiplication. 
\item If $c, c' \in ss^{-n}A$, then the degree of the expression is at least $2n-2+1=2n-1$. Therefore, for degree reasons, the left term of the equation is zero. \nolinebreak 
\end{itemize}
\end{itemize}
 \end{proof}

\begin{definition}
\label{d:cxi}
Let $\xi \in (\ata)^{2n-2}$. The CDGA $C(\xi)$ is the $\ata$-dgmodule  $C(\Delta^!)/C(\Delta^!)^{\geq 2n-1}$ equipped with the only structure of an algebra compatible with the $\ata$-dgmodule structure and such that $\ssn 1 \cdot \ssn 1=\xi$.  We will denote by  
\[\overline{\oid} \oplus 0 \colon \ata \to C(\xi)\] the composition of the inclusion $\oid\oplus 0: \ata \to \ata \oplus_{\Delta^!} \ssn A$ and the canonical projection $C(\Delta^!) \to C(\xi)$. 
\end{definition}

\subsection{CDGA model of $F(M,2)$ for a simply connected manifold of odd dimension}

Let $M$ be a simply connected closed manifold of odd dimension $n$. Let $A$ be a $1$-connected Poincar\'e duality model of $M$. In the previous sections we have showed that for all $\xi \in (\ata)^{2n-2}$ we can construct a CDGA $C(\xi)$ characterised by $(\ssn 1)^2= \xi$. The objective of this section is to show that there exists $\xi \in (\ata)^{2n-2}$ such that $C(\xi)$ is a rational model of $F(M,2)$. Specifically we will prove the following:
\begin{theorem}
\label{T:CDGA_FM2_odd}

Let $M$ be a simply connected closed manifold of odd dimension $n$ and let $A$ be a $1$-connected Poincar\'e duality model of $M$ of formal dimension $n$. There exists $\xi \in (\ata)^{2n-2}$, such that the map 
\[\overline{\oid}\oplus 0\colon \ata \to C(\xi)\]
is a CDGA model of the inclusion $F(M,2) \hookrightarrow M\times M$. 
\end{theorem}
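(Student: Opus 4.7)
The plan is to adapt the proof of Theorem \ref{T:CDGA_FM2_even} to the odd-dimensional case. The new feature is that the generator $u\in Z^{n-1}$ of the relative Sullivan model now has \emph{even} degree, so $u^2$ no longer vanishes for degree reasons, and the obstruction to multiplicativity it carries is precisely what $\xi$ is meant to absorb.

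I would start by fixing $\ahat$ linked to $A$ and $\apl(M)$ by a zig-zag of quasi-isomorphisms, and choosing a relative Sullivan model
\[\xymatrix{\ahat\otimes\ahat \ar@{ >->}[r]\ar[rd] & (\ahat\otimes\ahat\otimes\Lambda Z, D)\ar[d]^{\simeq}_m\\ & \apl(F(M,2))}\]
with $Z^{<n-1}=0$ and $Z^{n-1}=\bq\cdot u$. By Proposition \ref{p:dgmodule_model} together with cofibrancy, there is an $\ahat\otimes\ahat$-dgmodule quasi-isomorphism $\theta\colon (\ahat\otimes\ahat\otimes\Lambda Z, D)\to C(\Delta^!)$; composing with the (dgmodule) quasi-isomorphism $\pi\colon C(\Delta^!)\to C:=C(\Delta^!)/C(\Delta^!)^{\geq 2n-1}$ produces $\varphi=\pi\circ\theta$. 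By Lemma \ref{module_adgc_pour_fm2}, for any $\xi$ the map $\varphi$ is a CDGA morphism into $C(\xi)$ as soon as $\varphi(z z')=\varphi(z)\varphi(z')$ for $z,z'\in\Lambda^+Z$. Since $C^{\geq 2n-1}=0$ and $Z^{<n-1}=0$, the only case in which both sides can fail to vanish for degree reasons is $z=z'=u$, so the whole problem reduces to finding $\xi\in(\ata)^{2n-2}$ such that $\varphi(u^2)=\varphi(u)^2$ in $C(\xi)$.

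Writing $\varphi(u)=\alpha+\ssn 1$ (after rescaling to normalize the coefficient of $\ssn 1$ using $\delta\varphi(u)=\Delta$, which also forces $\alpha$ to be a cycle in $(\ahat\otimes\ahat)^{n-1}$) and decomposing $\varphi(u^2)=\beta+\ssn y$ with $\beta\in(\ahat\otimes\ahat)^{2n-2}$ and $y\in A^{n-1}$, the multiplication rules of $C(\xi)$ yield
\[\varphi(u)^2 \;=\; \alpha^2+\xi \;\pm\; 2\,\ssn\mu(\alpha),\]
where $\mu\colon A\otimes A\to A$ is the product on $A$. The equation $\varphi(u^2)=\varphi(u)^2$ therefore splits into the $\ata$-part $\xi=\beta-\alpha^2$, which simply defines $\xi$, and the $\ssn A$-part $y=\pm 2\mu(\alpha)$.

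The main obstacle is the latter $\ssn A$-identity. I would handle it using two ingredients: first, the vanishing of the Euler characteristic of a closed odd-dimensional manifold, which translates into $\mu(\Delta)=\sum(-1)^{|a_i|}a_ia_i^*=0$ and hence $\Delta\cdot\ssn 1=0$ in $C$, so that $\delta\varphi(u^2)=2\Delta\alpha$ lies entirely in the $\ata$-component; second, the Poincar\'e duality identity $\Delta\cdot(a\otimes 1)=\pm\Delta\cdot(1\otimes a)$ in $\ata$, which gives $\Delta\alpha=\pm\Delta(1\otimes\mu(\alpha))$. Together these make the differential constraint $d\beta+\Delta(1\otimes y)=2\Delta\alpha$ consistent with the desired choice $y=\pm 2\mu(\alpha)$, and one can then use the freedom in $\theta$ up to $\ahat\otimes\ahat$-dgmodule homotopy (modifying $\varphi(u^2)$ by a suitable $\delta\eta$) to enforce this identity on the nose. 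With $\xi:=\beta-\alpha^2$ for the adjusted $\theta$, $\varphi$ becomes a CDGA quasi-isomorphism into $C(\xi)$; normalizing $\theta(1)=1$ as in the proof of Theorem \ref{T:CDGA_FM2_even} identifies the $\ahat\otimes\ahat$-inclusion with $\overline{\oid}\oplus 0$, and concatenating the resulting zig-zag of CDGA quasi-isomorphisms yields the desired CDGA model of $F(M,2)\hookrightarrow M\times M$.
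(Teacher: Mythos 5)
Your overall route coincides with the paper's: the same zig-zag and relative Sullivan model with $Z^{<n-1}=0$ and $Z^{n-1}=\bq\cdot u$, the same reduction via Lemma \ref{module_adgc_pour_fm2} to the single equation $\varphi(u^2)=\varphi(u)^2$, and essentially the same definition of $\xi$ (your $\xi=\beta-\alpha^2$ is the paper's $\xi=\theta(u^2)-\tau^2$). The difference is your ``main obstacle'', which is in fact vacuous. Since $A$ is a $1$-connected Poincar\'e duality CDGA, $A^1=0$ forces $A^{n-1}=0$ by nondegeneracy of the pairing $A^1\otimes A^{n-1}\to\bq$, hence $(\ssn A)^{2n-2}\cong A^{n-1}=0$. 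You even record that $y\in A^{n-1}$, but do not draw the conclusion: the cross term $\alpha\cdot\ssn 1$ in $\varphi(u)^2$ and the component $\ssn y$ of $\varphi(u^2)$ vanish identically, so no adjustment of $\theta$ is needed and $\varphi(u)^2=\alpha^2+\xi=\varphi(u^2)$ holds by the definition of $\xi$ alone. This is exactly the paper's observation that $\tau\cdot\ssn 1\in \ssn(A^{n-1})=0$ and that $\theta(u^2)$ identifies with an element of $(\ata)^{2n-2}$. Deleting your Euler-characteristic/symmetry-of-$\Delta$/homotopy paragraph is not merely a simplification: the claim that a dgmodule homotopy of $\theta$ can enforce $y=\pm 2\mu(\alpha)$ ``on the nose'' is the one step you never justify (consistency of a differential constraint is not existence of the modification), and it is rescued only because both sides happen to be zero.

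Two smaller points where the paper is more careful than your sketch. First, it takes the zig-zag $\ahat\to A$, $\ahat\to\apl(M)$ to consist of \emph{surjective} quasi-isomorphisms, so that $Du=\tilde\Delta$ can be corrected by a coboundary to achieve $\rho(\tilde\Delta)=\Delta$ exactly; this is what legitimizes your on-the-nose identity $\delta\varphi(u)=\Delta$ and the conclusion that $\alpha$ is a cycle. Second, the normalization performed is $\theta(1)=1$ (needed so that the commuting square identifies the map under $\ahat\otimes\ahat$ with $\overline{\oid}\oplus 0$); the coefficient of $\ssn 1$ in $\theta(u)$ is then \emph{proved} to equal $1$ from $q\Delta+d\tau=\Delta$ and $[\Delta]\neq 0$ in $H^*(\ata)$. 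Rescaling $\theta$ to normalize that coefficient, as you propose, would a priori destroy $\theta(1)=1$; the two normalizations are compatible only because of this cohomological argument, which your write-up should include.
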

\begin{remark} The model $\ata$ of $M\times M$ that is mentioned in the theorem is the one naturaly induced by the CDGA model $A$ of $M$, i.e. the zig-zag of CDGA quasi-isomorphism linking $A\otimes A$ and $\apl (M\times M)$ is naturaly induced by the zig-zag of quasi-isomorphisms between $A$ and $\apl (M)$. We will use this fact in Section \ref{Section 7} to introduce Definition \ref{d:cxi_cxip}.
\end{remark}
\medskip

\begin{remark} We will see in Corollay \ref{C:CDGA_FM2_odd} that actually the model $C(\xi)$ is determined by a cohomology class $x \in H^{n-2}(M; \bq)$.
\end{remark}
\begin{proof}[Proof of Theorem \ref{T:CDGA_FM2_odd}]

Take $\ahat$ a CDGA such that we have
\[\xymatrix{A &\ar@{->>}[l]_-{\simeq} \ahat\ar@{->>}[r]^-{\simeq} & \apl(M)}\]
a zig-zag of surjective CDGA quasi-isomorphisms. This can be done easily using standard techniques of model categories.  The CDGAs $A$, $\ata$, $\apl(M\times M)$ and $\apl(F(M,2))$ inherit a $\ahat\otimes \ahat$-dgmodule structure.

Taking a relative Sullivan model of the map 
$f \colon \ahat\otimes \ahat \qi \apl(M\times M) \to \apl(F(M,2))$
we obtain the following commutative diagram
\[\xymatrix{\lvd \otimes \lvd \ar[r]^-{\simeq}\ar[dr] &\apl(M\times M) \ar[r] & \apl(F(M,2))\\
&(\ahat \otimes \ahat \otimes \Lambda Z, D). \ar[ur]_m^{\simeq}}\]
We can take $Z$ such that $Z^{<n-1}=0$ and $Z^{n-1}= u\cdot \bq$. By construction, $Du= \tilde \Delta$ where $\tilde \Delta \in \ahat\otimes \ahat$ is such that $[\tilde \Delta]$ is a generator of $\ker H^* (f)$. The surjectivity of $\rho \colon \lvlv\stackrel{\simeq}{\twoheadrightarrow} \ata$ allows us to take $\tilde \Delta$ such that $\rho(\tilde{\Delta})= \Delta.$ Indeed, let $\hat{\Delta} \in \ata \cap \ker d$ a generator of $\ker H^* (f)$. Since, $\hat \Delta$ represents the diagonal class in $H^*(\ahat \otimes \ahat) \cong H^*(M\times M)$, 
\[H^*(\rho)([\hat \Delta])= [\Delta] \in H^*(\ata).\]
Hence, $\rho (\hat \Delta) = \Delta + d \epsilon$, where $\epsilon \in (\ata)^{n-1}$. Since $\rho$ is surjective, there exists $\gamma \in \ahat \otimes \ahat$ such that $\rho(\gamma)= \epsilon$. Set $\tilde \Delta = (\hat{\Delta} -d\gamma)$. We have that $d\tilde\Delta = 0$ and $\rho(\tilde \Delta)= \Delta +d\epsilon -d\epsilon= \Delta$. 

By Proposition \ref{p:dgmodule_model}, $C(\Delta^!)= \ata \oplus_{\Delta^!} \ssn A$ and $\apl(F(M,2))$ are weakly equivalent as $\lvlv$-dgmodules. As $\lvvzd$ is a cofibrant $\lvlv$-dgmodule we have an $\lvlv$-dgmodules quasi-isomorphism 
\[\theta \colon  \lvvzd \qi C(\Delta^!).\] 
Multiplying $\theta$ by a non zero rational number we can assume that $\theta(1)=1$ and as in the end of proof of theorem \ref{T:CDGA_FM2_even} we show that the following diagram commutes 
\begin{equation}
\label{diag1}
\xymatrix{\ahat\otimes \ahat\ar[d] \ar[r]^{\rho} & \ata \ar[d]^{\oid\oplus 0} \\
\lvvzd \ar[r]^-{\theta} & C(\Delta^!)}
\end{equation}
Let us show that $\theta(u)= \ssn 1 + \tau$ with $\tau \in (\ata)^{n-1} \cap \ker d$.
Indeed, we remark that for degree reasons 
\[\theta (u) = q \ssn 1 + \tau\]
with $q \in \bq$ and $\tau \in (\ata)^{n-1}$. Let us denote by $\delta$ the differential in $C(\Delta^!)$, since $\theta$ commutes with the differentials we have
\begin{eqnarray*}
\delta (\theta (u)) &= &\theta (Du), \\
\delta(q \ssn 1 + \tau) &=& \theta(\tilde \Delta),\\
q\Delta + d\tau &=& \rho(\tilde \Delta)=\Delta.
\end{eqnarray*}
Hence, 
\begin{equation}
\label{tau_delta}
d\tau = (1-q)\Delta.
\end{equation}
Since $[\Delta]\neq 0$ in $H^*(\ata; \bq)$, the equation \eqref{tau_delta} is satisfied if and only if $q=1$ and thus $\theta (u) = \ssn 1 +\tau$ with $d\tau=0$.
\medskip

Let $\xi = \theta(u^2)-\tau^2$ so $|\xi|=2n-2$. Since $A$ is 1-connected $(ss^{-n} A)^{2n-2}\cong A^{n-1}=~0$, thus $\xi \in C(\Delta^!)$ can be identified wiht an element of $(\ata)^{2n-2}$.  Take $C(\xi)$ the CDGA of Definition \ref{d:cxi}. The composition
\[\xymatrix{\varphi \colon \lvvzd \ar[r]^-{\theta}& C(\Delta^!) \ar[r]^{\pi}& C(\xi)}\]
is an $\lvlv$-dgmodules quasi-isomorphism. Let us show that $\varphi$ is an algebra morphism. By Lemma \ref{module_adgc_pour_fm2}, it suffices to show that  for $z$, $z' \in \Lambda^+ Z$
\begin{equation}
\label{alg1}
\varphi(z \cdot  z')=  \varphi(z)\varphi(z').
\end{equation}
We study the following cases 
 \begin{itemize}
\item If $|z|> n-1$ or $|z'|>n-1$  then the degree of the expression is $\geq 2n-1$, and so both terms of the equation are zero.
\item If $|z|=n-1$ and $|z'|=n-1$, by linearity we can suppose that $z=z'=u$. Note that, since $A$ is a 1-connected Poincar\'e duality CDGA, $\tau\cdot  ss^{-n} 1 \in s\smn (A^{n-1})=0$. We compute that 
\[(\varphi(u))^2=(s\smn 1 + \tau)^2=(s\smn1)^2 + \tau^2=\xi + \tau^2= \pi (\theta(u^2))\]
and  \[\varphi(u^2)=\pi(\theta(u^2)).\]
\end{itemize}
This proves that $\varphi$ is an algebra morphism. 

Since $\varphi$ is an $\lvlv$-dgmodules quasi-isomorphism and is a CDGA map, it is a CDGA quasi-isomorphism. The following zig-zag of CDGA quasi-isomorphisms
\[\xymatrix{\apl(F(M,2)) & \ar[l]^-{m}_-{\simeq} \lvvzd \ar[r]_-{\varphi}^-{\simeq} &C(\xi),}\]
proves that $C(\xi)$ is a CDGA model of $F(M,2)$.  Since the diagram \eqref{diag1} commutes, $\ata \to C(\xi)$ is a CDGA model morphism of $\apl(M\times M) \to \apl(F(M,2))$.
\end{proof}

\section{Untwisted manifolds}
\label{model_s2xs3}

 It is well known that for $M$ a simply connected closed manifold \[H^*(F(M,2))\cong \frac{H^*(M)\otimes H^*(M)}{([\Delta])},\] where $[\Delta]$ is the diagonal class in $H^*(M)\otimes H^*(M)$. It is easy to see that for $\xi = 0$ the CDGA $C(\xi)$ (see Definition \ref{d:cxi}) is quasi-isomorphic to $\frac{\ata}{(\Delta)}$ as a CDGA. This suggests "naively" that $C(0)$ is the good candidate to be the rational model of $F(M,2)$ and therefore it suggests the following definition:

\begin{definition}
\label{d:untwisted}
A closed manifold $M$ is \emph{untwisted} if  $\ata \to \ata/(\Delta)$ is a CDGA model of $F(M,2)\hookrightarrow M\times M$, where $A$ is a $1$-connected Poincar\'e Duality CDGA model of $M$.
\end{definition}

In \cite{LSRHT} it is shown that all 2-connected closed manifolds are untwisted and in Section \ref{s:CDGA_FM2_even} we showed that every simply connected closed manifold of even dimension is untwisted. We also have the following result:

\begin{proposition}
\label{p:untwisted}
The product of two untwisted manifolds is untwisted.
\end{proposition}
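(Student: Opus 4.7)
Let $M_1,M_2$ be untwisted simply connected closed manifolds with $1$-connected Poincaré duality models $A_1, A_2$ of formal dimensions $n_1, n_2$. Then $M = M_1\times M_2$ is simply connected closed of dimension $n=n_1+n_2$, with Poincaré duality model $A := A_1 \otimes A_2$ (orientation $\epsilon_1 \otimes \epsilon_2$). Writing $R := (A_1\otimes A_1)\otimes(A_2\otimes A_2)$ and using the shuffle isomorphism $\sigma \colon A\otimes A \xrightarrow{\cong} R$ which swaps the middle two factors, my plan is to show that under $\sigma$ the diagonal class $\Delta_M$ of $A$ corresponds to $\Delta_1 \cdot \Delta_2$, and that $R/(\Delta_1\cdot\Delta_2)$ is a CDGA model of $F(M,2)\hookrightarrow M\times M$.

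\textbf{Step 1 (algebra).} Pick homogeneous bases $\{a_i\}$ of $A_1$ and $\{b_j\}$ of $A_2$ with Poincaré duals $\{a_i^*\}$, $\{b_j^*\}$. A direct computation with the Koszul sign rule shows that $(a_i\otimes b_j)^* = \pm a_i^*\otimes b_j^*$, so that $\sigma(\Delta_M) = \Delta_1\cdot\Delta_2$ in $R$, where $\Delta_1 = \sum_i \pm a_i\otimes a_i^* \otimes 1 \otimes 1$ and $\Delta_2 = \sum_j \pm 1\otimes 1\otimes b_j\otimes b_j^*$. Because $\Delta_1$ lives in the first two tensor factors of $R$ and $\Delta_2$ in the last two, the standard fact that $(x\otimes 1) R \cap (1\otimes y) R = (x\otimes y)R$ in a tensor product of CGAs gives $(\Delta_1)\cap(\Delta_2) = (\Delta_1\Delta_2)$, hence an isomorphism
\[
\sigma \colon (A\otimes A)/(\Delta_M) \xrightarrow{\cong} R/(\Delta_1 \Delta_2).
\]

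\textbf{Step 2 (geometric decomposition).} Identifying $M\times M \cong M_1^2\times M_2^2$, the complement of the diagonal decomposes as $F(M,2) = U \cup V$ with $U := F(M_1,2)\times M_2^2$, $V := M_1^2 \times F(M_2,2)$, both open in $M_1^2\times M_2^2$, and $U\cap V = F(M_1,2)\times F(M_2,2)$. This open cover gives a homotopy pushout of spaces, hence its $\apl$ is a homotopy pullback of CDGAs.

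\textbf{Step 3 (pulling back models).} The untwisted hypothesis provides CDGA quasi-isomorphisms $(A_i\otimes A_i)/(\Delta_i)\simeq \apl(F(M_i,2))$ compatible with the inclusions $F(M_i,2)\hookrightarrow M_i^2$ modelled by the canonical projections $A_i\otimes A_i \twoheadrightarrow (A_i\otimes A_i)/(\Delta_i)$. Tensoring and using the standard Künneth identification $\apl(X\times Y) \simeq \apl(X)\otimes \apl(Y)$, I get compatible CDGA models for $U$, $V$ and $U\cap V$, fitting into the diagram
\[
\xymatrix{
(A_1\otimes A_1)/(\Delta_1)\otimes (A_2\otimes A_2) \ar@{->>}[r] & (A_1\otimes A_1)/(\Delta_1)\otimes (A_2\otimes A_2)/(\Delta_2) & \ar@{->>}[l] (A_1\otimes A_1)\otimes (A_2\otimes A_2)/(\Delta_2).
}
\]
Both maps are surjective, hence are fibrations of CDGAs, so the strict pullback agrees with the homotopy pullback. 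By the kernel computation in Step~1, this strict pullback is precisely $R/(\Delta_1\Delta_2)$, and the canonical map from $R$ modelling $M^2 \to$ (each of the three spaces) factors through the quotient.

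\textbf{Step 4 (conclusion).} Composing with $\sigma^{-1}$ of Step 1, the map $A\otimes A \to (A\otimes A)/(\Delta_M)$ is a CDGA model of $F(M,2)\hookrightarrow M\times M$, proving that $M_1\times M_2$ is untwisted. The step I expect to be most delicate is Step 3: verifying that the tensor product of the CDGA quasi-isomorphisms obtained from the untwisted hypotheses assembles into a genuinely compatible commutative square of CDGA maps modelling the inclusions of $U\cap V$ into $U$ and $V$, so that the pullback description really gives a model of $F(M,2)$ rather than just of its underlying homotopy type.
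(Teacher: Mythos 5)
Your proposal is correct and takes essentially the same route as the paper: decompose $F(M_1\times M_2,2)$ as the homotopy pushout of $F(M_1,2)\times M_2^2$ and $M_1^2\times F(M_2,2)$ over $F(M_1,2)\times F(M_2,2)$, model it by the strict pullback of the surjective tensor-product models (legitimate since surjections make the homotopy pullback an actual pullback), and identify that pullback with $(A\otimes A)/(\Delta_M)$ via the shuffle isomorphism carrying $\Delta_1\cdot\Delta_2$ to $\Delta_M$. Your ideal-intersection computation $(\Delta_1)\cap(\Delta_2)=(\Delta_1\Delta_2)$ is exactly the paper's kernel computation $\ker\alpha=(\Delta_C\otimes\Delta_B)$ for the canonical map to the pullback, and the compatibility subtlety you flag in Step 3 is asserted at the same level of detail in the paper's own proof.
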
 
\begin{proof}
Notice that if $X$ and $Y$ are topological spaces, the configuration space $F(X\times Y, 2)$ can be viewed as the pushout of the diagram 
\begin{equation}
\label{d:fxy}
\xymatrix{X^2 \times F(Y,2) &\ar@{_{(}->}[l] F(X, 2) \times F(Y,2) \ar@{^{(}->}[r]&F(X,2) \times Y^2}.
\end{equation}
Replacing $\Delta (X)\subset X\times X$ by an open tubular neighborhood in $F(X,2)= X\times X \bs \Delta(X)$ and similarly replacing $\Delta(Y)\subset Y\times Y$  by an open tubular neighborhood in $F(Y,2)$ we can look at pushout \eqref{d:fxy} as a homotopy pushout.

Suppose that $X$ and $Y$ are untwisted manifolds, let $C$ be a Poincar\'e duality CDGA model of $X$ and $B$ a Poincar\'e duality CDGA model of $Y$. By definition of untwisted manifolds $\frac{C\otimes C}{(\Delta_C)}$ is  a CDGA model of $F(X,2)$ and $\frac{B\otimes B}{(\Delta_B)}$ is a CDGA model of $F(Y,2)$. Therefore a CDGA model of diagram \eqref{d:fxy} is given by
\begin{equation}
\label{fxy2}
\xymatrix{B\otimes B \otimes \frac{C\otimes C}{(\Delta_C)} \ar@{->>}[r]& \frac{B\otimes B}{(\Delta_B)} \otimes \frac{C\otimes C}{(\Delta_C)} &\ar@{->>}[l]\frac{B\otimes B}{(\Delta_B)} \otimes C\otimes C.}
\end{equation}
As both maps are surjective the homotopy pullback of diagram \eqref{fxy2} is the actual pullback of the diagram and hence this pullback is a CDGA model of $F(X\times Y, 2)$. 

Let us denote by $PB$ this pullback. The universal property guarantees the existence of a unique map
\[\alpha \colon C\otimes C  \otimes B \otimes B \to PB.\]
It is easy to show that $\alpha$ is surjective and that $\ker \alpha = ( \Delta_C \otimes  \Delta_B)$. Thus, \[PB \cong C\otimes C \otimes B\otimes B /(\Delta_C \otimes \Delta_B).\]

The CDGA $A= C\otimes B$ is a CDGA model of $X\times Y$ and denote by $\Delta_A$ the diagonal class in $A$. There is a natural isomorphim $C\otimes C \otimes B  \otimes B\cong  A\otimes A $ which, by an adequate choice of basis, sends $\Delta_C \otimes \Delta_B$ to $\Delta_A$. So $A\otimes A / (\Delta_A)$ is a CDGA model of $F(X\times Y,2)$.  
\end{proof}


As a consequence we have that, if $N$ is a 2-connected odd dimensional closed manifold then $N\times \underbrace{S^2 \times \cdots \times S^2}_{\text{$k$ times}}$ is untwisted.
We conjecture the following:

\begin{conjecture}
Every simply connected closed manifold is untwisted.
\end{conjecture}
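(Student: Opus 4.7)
The conjecture states that for every simply connected closed manifold $M$ of dimension $n$, the quotient $\ata/(\Delta)$ is a CDGA model of $F(M,2)$, where $A$ is a $1$-connected Poincar\'e duality CDGA model of $M$. By Theorem \ref{T:CDGA_FM2_even} and Proposition \ref{p:untwisted}, one may assume $n$ is odd and that $M$ does not split as a nontrivial product of untwisted manifolds; by \cite{LSRHT}, one may also assume $M$ is not $2$-connected, so $H^2(M;\bq)\neq 0$. The plan is to show that in the construction of Theorem \ref{T:CDGA_FM2_odd} the twisting element $\xi\in(\ata)^{2n-2}$ can always be chosen so that $C(\xi)\simeq C(0)$ as CDGAs over $\ata$, which by Corollary \ref{C:CDGA_FM2_odd} reduces to proving the vanishing of a single obstruction class $x\in H^{n-2}(M;\bq)$.

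First I would pin down the ambiguity in $\xi=\theta(u^2)-\tau^2$. Replacing $u$ by $u+\tau'$ for a cocycle $\tau'\in(\ata)^{n-1}$ and adjusting $\theta$ accordingly modifies $\xi$ by $2\tau\tau'+(\tau')^2$ modulo an exact term; tracking this ambiguity through the CDGA equivalences of the family $C(\xi)$ identifies a well-defined invariant in a subquotient of $H^{2n-2}(\ata/(\Delta))$, which by Poincar\'e duality on $M$ corresponds exactly to the class $x\in H^{n-2}(M;\bq)$ promised by the corollary. Next, I would exploit the $\mathbb{Z}/2$-action on $F(M,2)$ swapping the two points: the whole construction of Theorem \ref{T:CDGA_FM2_odd} can be arranged equivariantly (after an equivariant choice of $\ahat$, $\tilde\Delta$, and $u$), since the diagonal class is equivariant up to an overall sign depending on the parity of $n$. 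This constrains $\xi$ to a specific eigenspace of $(\ata)^{2n-2}$ under the transposition, eliminating a large portion of the possible indeterminacy and cutting the candidate space for $x$ in half.

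The step I expect to be genuinely hard --- and the reason this is still a conjecture --- is using these restrictions to conclude $x=0$. Nothing in the purely algebraic setup of Poincar\'e duality plus $\mathbb{Z}/2$-symmetry forces the vanishing of $x$, so any proof must inject further geometric content. A natural candidate is to compare $F(M,2)$ with the unit sphere bundle $S(TM)$ via the homotopy equivalence obtained by collapsing a closed tubular neighborhood of the diagonal: this should identify $x$ with (the image under some characteristic map of) a rational class coming from $TM$, and one would then try to show this class is rationally trivial by an Euler-class or Pontryagin-class computation, perhaps in the spirit of Lambrechts--Stanley's treatment of the $2$-connected case. Making this last step precise --- explicitly expressing the obstruction $x$ in terms of classical invariants of the tangent bundle and proving it rationally trivial --- is, in my view, the main difficulty; it would likely require working with a rational model of $BO(n)$ or of the tangent fibration rather than only with the CDGA $A$ itself, and would presumably also be where any putative counterexample (should one exist among simply connected odd-dimensional manifolds with large $H^2$) would appear.
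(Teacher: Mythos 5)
There is a genuine gap, and it is unavoidable: the statement you set out to prove is stated in the paper as a \emph{conjecture}, with no proof --- the introduction explicitly says ``It is still an open question to know if all manifolds are untwisted.'' Your proposal, by your own candid admission, is not a proof either. After the legitimate reductions (even dimension via Theorem \ref{T:CDGA_FM2_even}, products via Proposition \ref{p:untwisted}, the $2$-connected case via \cite{LSRHT}), the entire content of the conjecture is concentrated in your final step: showing that the class $x \in H^{n-2}(M;\bq)$ of Corollary \ref{C:CDGA_FM2_odd} vanishes for every simply connected closed odd-dimensional manifold. You leave that step as a speculative program (comparison with the sphere bundle of $TM$, characteristic-class computations in a model of $BO(n)$), so what you have written is a research plan, not an argument.

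Two of your intermediate claims also deserve concrete pushback. First, your idea of using the $\Sigma_2$-equivariance of $F(M,2)\hookrightarrow M\times M$ to ``cut the candidate space for $x$ in half'' is known to fail: the paper's final section points out, citing \cite[section 6.4]{these}, that this equivariance does \emph{not} allow one to restrict the choice of $x\in H^{n-2}(M;\bq)$. Second, your observation that no purely algebraic argument can force $x=0$ is confirmed, and sharpened, by the paper's own example: for $A=(H^*(S^2\times S^3;\bq),0)$, Proposition \ref{p:exemple} shows the CDGAs $C(x)$ are pairwise inequivalent under $\ata$ for distinct $x$, so the indeterminacy in $\xi$ genuinely does not collapse; $S^2\times S^3$ is untwisted only because it is a product of untwisted manifolds (Proposition \ref{p:untwisted}), not because the family $C(x)$ degenerates. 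Your diagnosis that further geometric input is required is consistent with the paper's own discussion, which floats a different speculative route (a correspondence, in the spirit of Habegger's thesis \cite{HAB-PHD}, between the structures $C(\xi)$ and isotopy classes of embeddings homotopic to the diagonal), but neither your route nor the paper's has been carried out, and the conjecture remains open.
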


\section{Models $\ata \to C(\xi)$ depend only on a class $x\in H^{n-2}(M;\bq)$ }
\label{Section 7}
A natural question is whether two different $\xi, \xi' \in (\ata)^{2n-2}$ induce isomorphic CDGAs $C(\xi) \cong C(\xi')$ (at the end of this section we show that this is not always the case). More precisely, in view of Theorem 9 where $C(\xi)$ is seen as a CDGA under $\ata$, the right question is whether $C(\xi)$ and $C(\xi')$ are equivalent in the following sense:



\begin{definition}
\label{d:cxi_cxip}
Let $A$ be a Poincar\'e duality CDGA of formal dimension $n$ odd and $\xi, \xi' \in (\ata)^{2n-2}$. We say that the CDGAs $C(\xi)$ and $C(\xi')$ are \emph{weakly equivalent under $\ata$} and we write $C(\xi)\simeq_{\ata} C(\xi')$, if there exists a commutative CDGA diagram   
\[\xymatrix{&C(\xi) \\
\ata \ar[ur]^{\bar\oid\oplus 0}\ar@{ >->}[r] \ar[dr]_{\bar\oid\oplus 0}  & \relm \ar[u]_{\simeq} \ar[d]^{\simeq}\\
&C(\xi'),}\]
where $(\relmi, D)$ is a relative Sullivan algebra. \end{definition}


We have the following result:
\begin{proposition}
\label{p:xi_xip}
Let $A$ be a 1-connected Poincar\'e duality CDGA of formal dimension $n$ odd, and $\Delta$ be the diagonal class in $\ata$. If $\xi, \xi'  \in (\ata)^{2n-2}$ are such that $[\xi]=[\xi'] \text{ in } \frac{H^{2n-2}(\ata)}{([\Delta])}$, 
then $C(\xi) \simeq_{\ata} C(\xi')$.
\end{proposition}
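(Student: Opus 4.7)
The plan is to construct a single relative Sullivan algebra $\relm$ over $\ata$ equipped with two CDGA quasi-isomorphisms $\psi_\xi \colon \relm \to C(\xi)$ and $\psi_{\xi'} \colon \relm \to C(\xi')$, each extending $\bar{\oid}\oplus 0$; by Definition~\ref{d:cxi_cxip} this yields the desired equivalence $C(\xi) \eqaa C(\xi')$.

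By the hypothesis, I fix a decomposition $\xi - \xi' = \alpha\Delta + d\eta$ with $\alpha \in (\ata)^{n-2}$ a cocycle and $\eta \in (\ata)^{2n-3}$. The key computational observation, valid in both targets since it uses only the common underlying $\ata$-dgmodule structure on $C(\Delta^!)/C(\Delta^!)^{\geq 2n-1}$ (and not the value of the twist $(\ssn 1)^2$), is
\[
\delta(\alpha \ssn 1 - \eta) = (-1)^{n-2}\alpha\Delta - d\eta = -\alpha\Delta - d\eta = \xi - \xi',
\]
using $n$ odd so $(-1)^{n-2} = -1$, and $d\alpha = 0$.

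First I will adjoin generators $z \in Z^{n-1}$ with $Dz = \Delta$ and $y \in Z^{2n-3}$ with $Dy = \xi - \xi'$ (both $D$-values are cocycles in $\ata$), and set
\[
\psi_\xi(z) = \psi_{\xi'}(z) = \ssn 1, \qquad \psi_\xi(y) = \psi_{\xi'}(y) = \alpha \ssn 1 - \eta.
\]
The identity above together with $\delta(\ssn 1) = \Delta$ shows that both assignments extend to well-defined CDGA chain maps on $\ata \otimes \Lambda(z, y)$, identical on $\ata$. Next I would inductively enlarge $Z$ in degrees $\geq n-1$, at each stage adjoining a generator $g$ with $Dg$ a cocycle of the current stage in order to kill residual cohomology, and extending $\psi_\xi$ (resp.\ $\psi_{\xi'}$) by choosing a primitive of $\psi_\xi(Dg)$ in $C(\xi)$ (resp.\ of $\psi_{\xi'}(Dg)$ in $C(\xi')$), arranging by construction that both maps become quasi-isomorphisms.

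The main obstacle is carrying out this parallel inductive extension and verifying that $\psi_{\xi'}$ is a quasi-isomorphism, not just a chain map. The point that makes the argument go through is that once $y$ has been adjoined, the difference $\xi - \xi'$ becomes $D$-exact in $\relm$, so no residual discrepancy between $\psi_\xi$ and $\psi_{\xi'}$ propagates through the later stages; combined with the fact that $C(\xi)$ and $C(\xi')$ coincide as $\ata$-dgmodules, the obstruction groups and cohomological dimension counts driving the Sullivan construction agree on both sides, forcing $\psi_{\xi'}$ to be a quasi-isomorphism whenever $\psi_\xi$ is.
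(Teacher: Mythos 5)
Your key identity is the right mechanism, and it is exactly the fact that drives the paper's own proof: since $[\xi]=[\xi']$ modulo $([\Delta])$, the element $\xi-\xi'$ becomes a coboundary in the cone, witnessed by $\alpha\,\ssn 1-\eta$. (Two small points here: with your decomposition $\xi-\xi'=\alpha\Delta+d\eta$ and $n$ odd, $\delta(\alpha\,\ssn 1-\eta)=-\alpha\Delta-d\eta=\xi'-\xi$, not $\xi-\xi'$, so you should decompose $\xi'-\xi$ instead; and your generator $y$ is redundant, because $D(\eta-\alpha z)=\alpha\Delta+d\eta=\xi-\xi'$ already holds in $\ata\otimes\Lambda(z)$, so adjoining $y$ kills a class that is already zero and creates spurious cohomology in degree $2n-3$ that your later stages must then eliminate.)

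The genuine gap is your last paragraph. The inductive construction of a relative Sullivan model makes \emph{one} map a quasi-isomorphism; to extend $\psi_{\xi'}$ over a new generator $g$ you need $\psi_{\xi'}(Dg)$ to be exact in $C(\xi')$, and at the end you need $H(\psi_{\xi'})$ to be an isomorphism, and neither follows from ``the obstruction groups and cohomological dimension counts agree'': $\psi_\xi$ and $\psi_{\xi'}$ are genuinely different linear maps into the same complex (already $\psi_\xi(z^2)=\xi$ while $\psi_{\xi'}(z^2)=\xi'$, and at later stages you choose primitives independently), and two chain maps out of a cofibrant object agreeing on $\ata$ need not be simultaneously quasi-isomorphisms. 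To close the gap along your lines you would have to track that the difference $\psi_\xi-\psi_{\xi'}$ stays confined to $\bq\cdot(\xi-\xi')$ plus $\bq\cdot(\alpha\,\ssn 1-\eta)$ (products of three or more generators land in degrees $\geq 3(n-1)\geq 2n-1$, which vanish in $C$, and $(\ata)^1=0$ kills the remaining cross terms), so that on cocycles it takes values in $\bq\cdot(\xi-\xi')$, which is exact by your identity; this forces the obstruction classes for $\psi_{\xi'}$ to coincide with those for $\psi_\xi$ and gives $H(\psi_{\xi'})=H(\psi_\xi)$. The paper avoids all of this bookkeeping: it quotients both algebras by a single acyclic differential ideal $I=S+d(S)+(\Delta)^{>n}+\ssn 1\cdot A^{+}$, where $S$ is a complement of the cocycles in $(\ata)^{2n-3}$, in which $\xi\equiv\xi'$, so that the $\ata$-dgmodule identification induces a CDGA isomorphism $C(\xi)/I\cong C(\xi')/I$; then a relative Sullivan model $\relm\qi C(\xi)$ lifts along the surjective quasi-isomorphism $C(\xi')\to C(\xi')/I$ by the lifting lemma, and two-out-of-three makes the lift a quasi-isomorphism with no induction at all. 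I recommend either supplying the difference-tracking argument sketched above or switching to the quotient-and-lift argument.
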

\begin{proof}

We have a canonical $\ata$-dgmodule isomorphism 
$C(\xi) \cong C(\xi').$
Abusing of notation, we can look at this as an identification since these two CDGA differ simply by the value of the square $(s\smn 1)^2$.

As $\ata$-dgmodules we have
\[C(\xi) = ((\ata)^{<2n-1} \oplus ss^{-n} (A^{<n-1}) , \delta)\]
with $\delta (ss^{-n}a)= \delta(a\cdot \ssn 1)= da \cdot \ssn 1 + (-1)^{|a|}(a\otimes 1)\cdot \Delta$. 
Set
\[(\Delta)^{>n} =\{(a\otimes b)\cdot \Delta \ | \   a\otimes b \in (\ata)^+\}.\]
Since the map $a\mapsto (a\otimes 1)\cdot \Delta$ is injective (see \cite[Lemma 5.4]{LSRHT}), it is clear that
\[(\Delta)^{>n} \oplus \ssn 1 \cdot A^+\]
is an acyclic differential ideal in $C(\Delta^!)$ and then, the natural projection of this ideal is also a differential ideal in $C(\xi)$ and $C(\xi')$. Consider $S$ a supplementary of cocycles in $(\ata)^{2n-3}$. The ideal
\[I=S+d(S) + (\Delta)^{>n}+ss^{-n}\cdot A^+,\]
is an acyclic differential ideal in $C(\Delta^!)$ and so the natural projection of this ideal is also an acyclic ideal in $C(\xi)$ and $C(\xi')$. We have two CDGA quasi-isomorphisms 
 $C(\xi)\qi \nolinebreak C(\xi)/I$ and $C(\xi') \qi C(\xi')/I$. Since $\xi-\xi' \in (\Delta) + d\left((\ata)^{2n-3}\right),$ $\xi=\xi' \bmod I.$ Therefore, the identification of $\ata$-dgmodules $C(\xi)=C(\xi')$ induces a CDGA isomorphism $C(\xi)/I \cong C(\xi')/I.$
We have then the following CDGA diagram 
\[\xymatrix{ & C(\xi) \ar[r]^{\simeq} & C(\xi)/I\ar[dd]_{\cong} \\
\ata\ar[ur]\ar[dr] && \\
&C(\xi') \ar[r]^{\simeq} & C(\xi')/I.}\]
Thereafter, 
taking a relative Sullivan model of $\ata \to C(\xi)$ and using the lifting lemma we obtain the following CDGA diagram 
\[\xymatrix{\ata \ar@{ >->}[d]\ar[rr] && C(\xi') \ar@{->>}[d]_{\simeq} \\
\relm\ar@{-->}[urr]^{\lambda} \ar[r]_-{\simeq} & C(\xi) \ar[r]_-{\simeq} & C(\xi)/I\cong C(\xi')/I.}\]
Hence, \[C(\xi)\simeq_{\ata} C(\xi').\]
\end{proof}
As a corollary we have:
\begin{corollary}
For $A$ a $1$-connected Poincar\'e duality CDGA of formal dimension $n$ odd, we have a natural surjection
\[\xymatrix{\psi \colon H^{2n-2}(\ata)/\left([\Delta]\right) \ar@{->>}[r]&\{C(\xi) \colon \xi \in (\ata)^{2n-2}\}/\simeq_{\ata},}\]
where the codomain of the map $\psi$ is the set of equivalence classes under $\ata$ of the CDGA $C(\xi)$.\end{corollary}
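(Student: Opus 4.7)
The plan is to define the map $\psi$ by sending a class $[\xi] \in H^{2n-2}(\ata)/([\Delta])$ to $[C(\xi)]_{\simeq_{\ata}}$ for any representative cocycle $\xi$, and to verify well-definedness and surjectivity as a direct consequence of Proposition~\ref{p:xi_xip}.

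The first step I would take is a degree observation: every element of $(\ata)^{2n-2}$ is automatically a cocycle. Indeed, since $A$ is a $1$-connected Poincar\'e duality CDGA of formal dimension $n$, the condition $A^1 = 0$ combined with Poincar\'e duality forces $A^{n-1} = 0$, and the non-degeneracy axiom implies $A$ vanishes outside degrees $[0,n]$. A K\"unneth-style decomposition of $(\ata)^{2n-1} = \bigoplus_{i+j=2n-1} A^i \otimes A^j$ under the constraint $0 \leq i,j \leq n$ leaves only the summands $A^{n-1}\otimes A^n$ and $A^n \otimes A^{n-1}$, both of which vanish. Hence $(\ata)^{2n-1} = 0$, so $d\xi = 0$ for every $\xi \in (\ata)^{2n-2}$, and every such $\xi$ determines a class $[\xi] \in H^{2n-2}(\ata)$.

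Given this, well-definedness of $\psi$ is an immediate invocation of Proposition~\ref{p:xi_xip}: if $\xi, \xi' \in (\ata)^{2n-2}$ satisfy $[\xi] = [\xi']$ in $H^{2n-2}(\ata)/([\Delta])$, then $\xi - \xi' \in (\Delta) + d((\ata)^{2n-3})$, so Proposition~\ref{p:xi_xip} yields $C(\xi) \simeq_{\ata} C(\xi')$ and $\psi([\xi])$ is independent of the chosen representative. Surjectivity is then tautological: any element of the codomain is by definition of the form $[C(\xi)]_{\simeq_{\ata}}$ for some $\xi \in (\ata)^{2n-2}$, and by the preceding observation $\xi$ is a cocycle representing a class $[\xi] \in H^{2n-2}(\ata)/([\Delta])$ with $\psi([\xi]) = [C(\xi)]_{\simeq_{\ata}}$.

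No substantial obstacle is expected; the corollary is a formal consequence of Proposition~\ref{p:xi_xip} once the vanishing $(\ata)^{2n-1} = 0$ is noted. The one delicate point is this degree calculation, which relies on the convention (forced by the paper's non-degeneracy axiom) that $A$ is concentrated in degrees $[0,n]$ and has $A^{n-1} = 0$ by Poincar\'e duality applied to $A^1 = 0$.
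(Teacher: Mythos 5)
Your proof is correct and matches the paper's (implicit) argument: the corollary is stated there as an immediate consequence of Proposition~\ref{p:xi_xip}, with well-definedness given by that proposition and surjectivity tautological. Your added observation that $(\ata)^{2n-1}=0$ (from $A^{n-1}=0$ and the concentration of $A$ in degrees $[0,n]$), so that every $\xi\in(\ata)^{2n-2}$ is a cocycle and genuinely represents a class in $H^{2n-2}(\ata)/([\Delta])$, is a correct and worthwhile point that the paper leaves unstated.
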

We conjecture the following:
\begin{conjecture}
\label{conj:xi}
The map $\psi$ of the previous corollary is a bijection.
\end{conjecture}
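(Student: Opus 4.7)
Plan: Since $\psi$ is surjective by construction, the conjecture reduces to injectivity: given $C(\xi) \simeq_{\ata} C(\xi')$, show $[\xi] = [\xi']$ in $H^{2n-2}(\ata)/([\Delta])$. Unpacking the definition, fix a relative Sullivan algebra $R = (\ata \otimes \Lambda Z, D)$ with CDGA quasi-isomorphisms $\lambda \colon R \to C(\xi)$ and $\lambda' \colon R \to C(\xi')$ under $\ata$, and normalize $Z$ so that $Z^{<n-1} = 0$ and $Z^{n-1} = \bq \cdot u$ with $Du = \Delta$, as in the proof of Theorem \ref{T:CDGA_FM2_odd}. That same argument yields $\lambda(u) = \ssn 1 + \tau$ and $\lambda'(u) = \ssn 1 + \tau'$ for cocycles $\tau, \tau' \in (\ata)^{n-1}$, and using $A^{n-1} = 0$ (since $A$ is $1$-connected Poincar\'e duality) to kill cross terms, one obtains
\[\lambda(u^2) = \xi + \tau^2, \qquad \lambda'(u^2) = \xi' + (\tau')^2 \quad \text{in } (\ata)^{2n-2}.\]

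The central step is to form the homotopy pushout $E \defeq C(\xi) \sqcup_R^h C(\xi')$ in CDGAs. Since $R$ is cofibrant and $\lambda, \lambda'$ are weak equivalences, the standard pushout-along-weak-equivalence lemma produces a zig-zag $C(\xi) \simeq E \simeq C(\xi')$, so $H^{2n-2}(E) \cong H^{2n-2}(\ata/(\Delta)) \cong H^{2n-2}(\ata)/([\Delta])$. The pushout relation $\lambda(u^2) \otimes 1 = 1 \otimes \lambda'(u^2)$, combined with the $\ata$-identification $a \otimes 1 = 1 \otimes a$, forces the cocycle $\xi + \tau^2 - \xi' - (\tau')^2 \in (\ata)^{2n-2}$ to vanish in $E$, hence in $H^{2n-2}(E)$. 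Therefore
\[[\xi] - [\xi'] \;=\; [(\tau')^2 - \tau^2] \quad \text{in } H^{2n-2}(\ata)/([\Delta]).\]

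To close the argument it suffices to establish $[\tau] = [\tau']$ in $H^{n-1}(\ata)$. Indeed, writing $\tau - \tau' = d\sigma$ and using that $|\tau| = n-1$ is even (as $n$ is odd), graded commutativity gives
\[\tau^2 - (\tau')^2 = (\tau+\tau')(\tau-\tau') = (\tau+\tau')\, d\sigma = d\bigl((\tau+\tau')\sigma\bigr),\]
so $[(\tau')^2] = [\tau^2]$ and the desired equality $[\xi] = [\xi']$ follows. The main obstacle is precisely the equality $[\tau] = [\tau']$: these classes are not invariants of the $\simeq_{\ata}$-equivalence class of $C(\xi)$ alone but depend on the chosen quasi-isomorphisms $\lambda, \lambda'$. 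Controlling their difference requires a deformation-theoretic analysis of how the higher generators of $Z$ (in degrees $\geq n$) constrain the maps $\lambda, \lambda'$; equivalently, one must show that $\lambda$ can be modified within its homotopy class under $\ata$ so that $\tau = 0$, and analogously for $\lambda'$. This step is the heart of the conjecture and explains why it remains open.
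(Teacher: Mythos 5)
You have attempted to prove a statement that the paper itself leaves open: this is Conjecture \ref{conj:xi}, the paper offers no proof of it, and Section 8 explicitly names proving it as the next step. The only supporting evidence in the paper is Proposition \ref{p:exemple}, which verifies injectivity by an explicit relative Sullivan model computation for the single case $A=(H^*(S^2\times S^3;\bq),0)$, using Lemma \ref{eqaa}. Your proposal is consistent with this status: surjectivity is indeed Proposition \ref{p:xi_xip}, your normalization of $Z$ and the identification $\lambda(u)=\ssn 1+\tau$ with $d\tau=0$ correctly reuse the computation from the proof of Theorem \ref{T:CDGA_FM2_odd} (and the vanishing of the cross terms via $A^{n-1}=0$ is right), and you candidly concede that the step $[\tau]=[\tau']$ is unproven. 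So what you have is a partial reduction plus an admission that the heart of the matter is open; it is not a proof, and there is no paper proof to measure it against.

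Beyond the conceded gap, your intermediate identity $[\xi]-[\xi']=[(\tau')^2-\tau^2]$ is itself not established, for two concrete reasons. First, $u^2$ is not a cocycle in $R=(\relmi,D)$: there $Du^2=2u\Delta\neq 0$ (it only becomes closed after applying $\lambda$ or $\lambda'$, because $C(\xi)^{\geq 2n-1}=0$), so $u^2$ has no class that can be transported through $H^{2n-2}(R)$ or through a homotopy pushout. Second, you conflate the strict and the homotopy pushout: in the strict pushout the relation between the images of $u^2$ holds on the nose, but neither $\lambda$ nor $\lambda'$ is a cofibration, so that pushout does not compute the homotopy pushout and its $H^{2n-2}$ need not be $H^{2n-2}(\ata)/([\Delta])$; whereas after cofibrantly replacing one leg the cohomology is right but the strict relation disappears. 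The repairable version of your argument is to choose $\beta\in R^{2n-2}$ with $D\beta=u\Delta$ (possible since $\Delta^2=0$ for $n$ odd, so $u\Delta$ is a cocycle, and $H^{2n-1}(R)\cong H^{2n-1}(C(\xi))=0$), so that $u^2-2\beta$ is a cocycle; pushing its class through the two quasi-isomorphisms under $\ata$, and using that both $\lambda(\beta)$ and $\lambda'(\beta)$ lie in $(\ata)^{2n-2}$, which consists of cocycles since $(\ata)^{2n-1}=0$, yields
\[
[\xi]-[\xi'] \;=\; [(\tau')^2-\tau^2] \;+\; 2\,[\lambda(\beta)-\lambda'(\beta)] \quad\text{in } H^{2n-2}(\ata)/([\Delta]),
\]
with a correction term your formula omits and which is exactly as hard to control as $[\tau]-[\tau']$. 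This matches the paper's one worked case: in Proposition \ref{p:exemple} the degree-$7$ generator $h$, with $Dh=u^2-2(z_{61}(1\otimes x)+z_{61}(x\otimes 1))+q(y\otimes xy)+r(xy\otimes y)$, plays precisely the role of such a $\beta$, and the conclusion $q=r$ is reached only by tracking those correction terms through an explicit isomorphism fixing $\ata$. In short: the conjecture remains open, the step you flag as the heart of the problem is indeed the heart of the problem, but your pushout step also contains a genuine error that must be repaired even before reaching it.
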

The interest of this conjecture is to link it to the next proposition and then obtain a more intrinsic caracterisation of the CDGA $C(\xi)$.
\begin{proposition}
\label{p:phi_iso}
For $A$ a $1$-connected Poincar\'e duality CDGA of formal dimension $n$ odd, we have a natural linear isomorphism 
\[\Phi \colon H^{n-2}(A) \stackrel{\cong}\longrightarrow \frac{H^{2n-2}(\ata)}{(\Delta)} \ ; \ [a] \mapsto [a\otimes \omega] \bmod ([\Delta]).\]
\end{proposition}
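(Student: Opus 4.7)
The plan is to compute both sides explicitly using the K\"unneth formula and Poincar\'e duality. Well-definedness of $\Phi$ is immediate: Poincar\'e duality forces $A$ to be concentrated in degrees $[0,n]$, so $\omega\in A^{n}$ is automatically a cocycle, whence $d(a\otimes\omega)=(da)\otimes\omega$ sends cocycles to cocycles and coboundaries to coboundaries. For the target, the K\"unneth formula gives
\[H^{2n-2}(\ata)\;\cong\;\bigoplus_{p+q=2n-2}H^{p}(A)\otimes H^{q}(A).\]
Since $H^{k}(A)=0$ for $k>n$ and $H^{n-1}(A)\cong H^{1}(A)^{*}=0$ (Poincar\'e duality combined with the $1$-connected hypothesis), the only surviving summands are $H^{n-2}(A)\otimes H^{n}(A)$ and $H^{n}(A)\otimes H^{n-2}(A)$, giving
\[H^{2n-2}(\ata)\;\cong\;\bigl(H^{n-2}(A)\otimes\omega\bigr)\,\oplus\,\bigl(\omega\otimes H^{n-2}(A)\bigr).\]

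Next, I would identify the ideal $([\Delta])$ in degree $2n-2$. Using the standard cohomological identity $(a\otimes 1)[\Delta]=(1\otimes a)[\Delta]$ for $a\in H^{*}(A)$ (a formal consequence of the definition of $\Delta$ via the Poincar\'e dual basis), every product $[u\otimes v]\cdot[\Delta]$ with $|u|+|v|=n-2$ reduces to $[\Delta]\cdot(1\otimes[uv])$. Thus the ideal in degree $2n-2$ is spanned by the classes $[\Delta]\cdot(1\otimes x)$ with $[x]\in H^{n-2}(A)$. Inserting $\Delta=\sum_{i}(-1)^{|a_{i}|}a_{i}\otimes a_{i}^{*}$ and keeping only the $|a_{i}|\in\{n-2,n\}$ terms (the others vanish because $H^{n-1}=0$), the total sign factor $(-1)^{|a_{i}|(n-1)}$ collapses to $+1$ since $n$ is odd; a short computation using the Poincar\'e pairing $b_{k}b_{j}^{*}=\delta_{jk}\omega$ in $H^{n}(A)$ produces
\[[\Delta]\cdot(1\otimes x)\;=\;\omega\otimes x\,+\,x\otimes\omega\qquad\text{in }H^{2n-2}(\ata).\]

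Finally, $H^{2n-2}(\ata)/([\Delta])$ is therefore the cokernel of the diagonal embedding $H^{n-2}(A)\hookrightarrow H^{n-2}(A)\oplus H^{n-2}(A)$, $x\mapsto(x,x)$, under the identification of the two summands with $\omega\otimes H^{n-2}(A)$ and $H^{n-2}(A)\otimes\omega$. Projection onto the second summand descends to an isomorphism $H^{2n-2}(\ata)/([\Delta])\cong H^{n-2}(A)$, whose inverse is exactly $\Phi$. The main technical obstacle is the sign computation for $[\Delta]\cdot(1\otimes x)$: one must carefully balance the $(-1)^{|a_{i}|}$ from $\Delta$ against the graded-commutativity swaps, and also justify the symmetry $(a\otimes 1)[\Delta]=(1\otimes a)[\Delta]$ in cohomology.
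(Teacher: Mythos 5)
Your proof is correct and shares its computational core with the paper's: both reduce degree $2n-2$ to the two summands $H^{n-2}(A)\otimes \bq\cdot[\omega]$ and $\bq\cdot[\omega]\otimes H^{n-2}(A)$ (using $H^{n-1}(A)=0$ from $1$-connectedness plus Poincar\'e duality), and both identify the degree-$(2n-2)$ part of the ideal $([\Delta])$ as the subspace spanned by the classes $x\otimes\omega+\omega\otimes x$. The routes differ in the level at which they work. The paper first normalizes $A$ at the cochain level (WLOG $A^0=\bq$, $A^1=0$, $A^2\cong H^2(A)$, hence by duality $A^{n-1}=0$ and $A^{n-2}\cong H^{n-2}(A)$), so that $H^{2n-2}(\ata)\cong(\ata)^{2n-2}$; surjectivity then follows from the single cochain identity $(a\otimes 1)\cdot\Delta=a\otimes\omega+\omega\otimes a$, and injectivity is proved by an explicit computation with degree-$(2n-3)$ cochains (writing $a\otimes\omega=u\otimes\omega+\omega\otimes u+dx\otimes\omega+\omega\otimes dy$ and deducing $a=d(x-y)$). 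You stay in cohomology throughout, via K\"unneth and the symmetry $(a\otimes 1)[\Delta]=(1\otimes a)[\Delta]$, which lets you handle injectivity by pure linear algebra as the cokernel of a diagonal embedding; this is cleaner and avoids normalizing $A$, at the cost of invoking that symmetry of the diagonal class --- which is legitimate, as it is a standard property (geometrically, both sides equal the pushforward of $a$ along the diagonal) and is implicitly what makes the paper's identity produce both summands at once.

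Two small slips, neither fatal. First, your final map does not descend as stated: the diagonal $\{(x,x)\}$ is not contained in the kernel of the projection onto the second summand. What descends to the quotient is the difference map $(a,b)\mapsto a-b$, whose kernel is exactly the diagonal; since its composite with $a\mapsto(a,0)$ is the identity, $\Phi$ is indeed the inverse of the induced isomorphism --- a one-line repair. Second, the sign bookkeeping is off: the coefficient appearing in $\Delta$ is $(-1)^{|a_i|}$, not $(-1)^{|a_i|(n-1)}$, and with the paper's conventions one finds $(1\otimes x)\cdot\Delta=\pm\left(x\otimes\omega+\omega\otimes x\right)$; the overall sign is irrelevant for your argument, since only the span of these classes enters the quotient.
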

\begin{proof}

The morphism $\Phi$ is well defined since it is induced by the linear map, 
$A^{n-2} \to (A\otimes~A)^{2n-2}$ defined by $a \mapsto a\otimes \omega$,
 that sends coboundaries to coboundaries. To prove the surjectivity of $\Phi$, first of all, since $A$ is a 1-connected CDGA, we suppose, without loss of generality, that $A^0 \cong  \bq$, $A^1=0$ and $A^2 \cong H^2 (A)$. Henceforth, by Poincar\'e duality we have that $A^{n-2} \cong H^{n-2} (A)$. Then we notice that  
\[H^{2n-2}(\ata) \cong (\ata)^{2n-2} = \left(A^{n-2}\otimes \bq\cdot \omega\right) \oplus \left(\bq\cdot \omega \otimes A^{n-2}\right).\]
Thus, it suffices to prove that every element of the form $[\omega \otimes a] \bmod ([\Delta])$, with $a\in A^{n-2}$, is in the image of $\Phi$. A simple computation shows that 
$(a\otimes 1)\cdot \Delta= a\otimes \omega + \omega \otimes a,$
hence   $[\omega\otimes a]= \Phi([a]) \bmod ([\Delta]).$

To prove injectivity suppose that $\Phi([a])=0$, in other words $a\otimes \omega$ is cohomologous to a multiple of $\Delta$. The multiples of $\Delta$ in degree $2n-2$ are of the form $u\otimes \omega + \omega \otimes u$ with $u\in A^{n-2}$. For degree reasons and since $A^{n-1}=0$ (because $A$ is 1-connected), 
\[(\ata)^{2n-3}= ( \bq\cdot \omega \otimes A^{n-3})\oplus (A^{n-3}\otimes \bq\cdot\omega).\]
Therefore, there exists $x, y \in A^{n-3}$ such that
\[a\otimes \omega =  u\otimes \omega +\omega\otimes u + dx \otimes \omega +\omega \otimes dy. \]
Hence, $a=u+dx$ and $0=u+dy$, whence $a=d(x-y)$, namely $[a]=0$ in $H^{n-2}(A)$.
\end{proof}

Let $x\in H^{n-2}(M; \bq)$ and assume that $A$ is a 1-connected Poincar\'e duality CDGA model of $M$. We can choose $\xi \in (A\otimes A)^{n-2}$ such that $\Phi (x) = \xi \pmod \Delta$. Moreover, Proposition \ref{p:xi_xip} and Proposition \ref{p:phi_iso} show that the CDGA map 
\[\ata \to C(\xi)\] is independent of the choice of $\xi \in \ata$. We define then the CDGA \[C(x)\defeq C(\xi).\]
Theorem \ref{T:CDGA_FM2_odd} can be reformulated as follows:
\begin{corollary}
\label{C:CDGA_FM2_odd}
Let $M$ be a simply connected closed manifold of odd dimension $n$ and let $A$ be a $1$-connected Poincar\'e duality model of $M$ of formal dimension $n$. There exists $x \in H^{n-2}(M; \bq)$, such that the map 
\[\ata \to C(x)\]
is a CDGA model of the inclusion $F(M,2) \hookrightarrow M\times M$. 
\end{corollary}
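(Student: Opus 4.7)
The plan is to derive the corollary directly from Theorem \ref{T:CDGA_FM2_odd} together with Proposition \ref{p:xi_xip} and Proposition \ref{p:phi_iso}. By Theorem \ref{T:CDGA_FM2_odd}, there exists some $\xi \in (\ata)^{2n-2}$ such that the map $\overline{\oid}\oplus 0 \colon \ata \to C(\xi)$ is a CDGA model of the inclusion $F(M,2)\hookrightarrow M\times M$. The only issue is that the theorem produces $\xi$ as a cochain in $\ata$, whereas the corollary asks for a \emph{cohomology} class in $H^{n-2}(M;\bq)$, so the task is to relate these two data.

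First I would consider the class $[\xi] \in H^{2n-2}(\ata)/([\Delta])$. By Proposition \ref{p:phi_iso}, the linear map
\[\Phi \colon H^{n-2}(A) \stackrel{\cong}{\longrightarrow} \frac{H^{2n-2}(\ata)}{([\Delta])}\]
is an isomorphism, so I can define $x \defeq \Phi^{-1}([\xi]) \in H^{n-2}(A) \cong H^{n-2}(M;\bq)$, where the last identification uses that $A$ is a CDGA model of $M$. Then by construction $\Phi(x) = [\xi] \pmod{[\Delta]}$, which is exactly the compatibility condition used in the paragraph preceding the corollary to define $C(x)$. Hence $C(x) = C(\xi')$ for any representative $\xi'$ of $\Phi(x)$, and in particular we may take $\xi'=\xi$.

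Next, I would appeal to Proposition \ref{p:xi_xip}: since any two representatives $\xi, \xi'$ whose classes agree modulo $([\Delta])$ give CDGAs that are weakly equivalent under $\ata$, the definition $C(x)\defeq C(\xi)$ is independent of the choice of representative up to $\simeq_{\ata}$. Therefore the CDGA map $\ata \to C(x)$ agrees, as an arrow under $\ata$ in the homotopy category of CDGAs, with the map $\ata \to C(\xi)$ provided by Theorem \ref{T:CDGA_FM2_odd}.

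Combining these observations, the zig-zag obtained from Theorem \ref{T:CDGA_FM2_odd} factors through $C(x)$ via the weak equivalence under $\ata$ given by Proposition \ref{p:xi_xip}, and we conclude that $\ata \to C(x)$ is a CDGA model of the inclusion $F(M,2)\hookrightarrow M\times M$. There is no real obstacle here beyond bookkeeping: the entire content of the corollary is already contained in the three earlier results, and the proof is essentially a translation of the parameter $\xi \in (\ata)^{2n-2}$ into the more intrinsic cohomological datum $x \in H^{n-2}(M;\bq)$ afforded by the isomorphism $\Phi$.
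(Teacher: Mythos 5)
Your proof is correct and follows essentially the same route as the paper, which states the corollary as a direct reformulation of Theorem \ref{T:CDGA_FM2_odd} after defining $C(x)\defeq C(\xi)$ via the isomorphism $\Phi$ of Proposition \ref{p:phi_iso} and the well-definedness up to $\simeq_{\ata}$ given by Proposition \ref{p:xi_xip}. Setting $x=\Phi^{-1}([\xi])$ and invoking those two propositions is exactly the paper's (implicit) argument.
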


We conclude with an example that illustrates the fact that not all the CDGA $C(\xi)$ are weakly equivalent under $\ata.$

\subsection* {The example $A= H^*(S^2\times S^3; \bq)$} 
\label{exemple}
Let us take the CDGA  \[(A,d)= \left(H^*(S^2\times S^3; \bq), 0\right)= \left(\{1,x_2, y_3, xy\}\cdot \mathbb{Q},0\right).\] It is easily verified that $A$ is a Poincar\'e duality CDGA and that the diagonal class in $\ata$ is given by
$\Delta = 1\otimes xy +x\otimes y - y\otimes x -xy\otimes 1.$
Let us define the morphism
$\Delta^! \colon \smn A \to A\otimes A \ ; \ \smn a \mapsto \Delta(1\otimes a).$
%
We will study the mapping cone  $(C(\Delta^!), \delta)$ 
and for the sake of conciseness we will use the notation $S=ss^{-5}.$

We can compute that 
$\delta (S1)= \Delta$, $\delta (Sx)=  x\otimes xy - xy \otimes x$, $\delta(Sy)= -y\otimes xy -xy\otimes y$,
$\delta(Sxy)=-xy\otimes xy.$

Let us quotient the cone $C(\Delta^!)$ by the differential acyclic submodule $\langle Sxy, xy\otimes xy\rangle$ and endow $C(\Delta^!)/\langle Sxy, xy\otimes xy\rangle$ with one of the CDGA structures described in Section \ref{s:odd}. For this, we have to define the product $S1\cdot S1$ which for degree reasons have to be an element in  $(\ata)^8$. For $q, r \in \mathbb{Q}$, set
\[ S1\cdot S1 = q (y\otimes xy) + r  (xy \otimes y)=\xi. \]
We denote by $C(q,r)=C(\xi)$ the obtained CDGA.

\begin{proposition}
\label{p:exemple}
Set $A= (H^*(S^2 \times S^3; \bq), 0)$ which is a Poincar\'e Duality CDGA model of $M=S^2\times S^3$ of formal dimesion $n=5$. Let $x, x' \in H^{n-2}(A)\cong \bq$, then 
\[C(x) \simeq_{\ata} C(x') \iff x=x'.\]
\end{proposition}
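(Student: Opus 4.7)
The plan is as follows. The backward direction ($x = x' \Rightarrow C(x)\simeq_{\ata}C(x')$) is immediate from the definition of $C(x)$ together with Proposition~\ref{p:xi_xip}. For the converse I will extract, from any common relative Sullivan algebra linking $C(\xi)$ and $C(\xi')$ under $\ata$, an invariant that recovers $[\xi] \bmod (\Delta) \in H^{2n-2}(\ata)/([\Delta])$. The invariant will be the class of the square of a primitive of $\Delta$.

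Assume $C(\xi) \simeq_{\ata} C(\xi')$, and let $R = (\ata \otimes \Lambda Z, D)$ together with quasi-isomorphisms $\alpha \colon R \to C(\xi)$ and $\alpha' \colon R \to C(\xi')$ witness this equivalence. Since $\Delta = \delta(ss^{-5}1)$ is a coboundary in $C(\xi)$ and $\alpha$ is a quasi-isomorphism, $[\Delta] = 0$ in $H^*(R)$, so some $v \in R^{n-1}$ satisfies $Dv = \Delta$. The space $C(\xi)^{4}$ has basis $\{x\otimes x,\ ss^{-5}1\}$, and only $ss^{-5}1$ is sent to $\Delta$ by $\delta$, so necessarily $\alpha(v) = c(x\otimes x) + ss^{-5}1$ for some $c \in \bq$. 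The heart of the proof is then to compute $\alpha(v^2)$ exactly: using the formula for the $\ata$-module action on $ss^{-n}A$ together with $x^2 = 0$ in $A$, both cross terms $(x\otimes x)^2$ and $(x\otimes x)\cdot ss^{-5}1 = \pm ss^{-5}(x^2)$ vanish, hence $\alpha(v^2) = (ss^{-5}1)^2 = \xi$. The same reasoning gives $\alpha'(v^2) = \xi'$.

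To finish, I will verify by dimension counting and direct inspection of the differential that the canonical surjection $\ata/(\Delta) \twoheadrightarrow H^*(C(\xi))$ is an $\ata$-algebra isomorphism for every $\xi$: both sides have dimension $12$, and no new cocycles arise from the $ss^{-n}A$ summand because $\Delta\cdot(1\otimes a) \neq 0$ for every nonzero $a \in A^{<n}$. The composition $H^*(\alpha')\circ H^*(\alpha)^{-1}$ is therefore an $\ata$-algebra automorphism of $\ata/(\Delta)$, and since it is $\ata$-linear and fixes $1$, it must be the identity. Tracking $[v^2]$ through this identification yields $[\xi] = [\xi']$ in $\ata/(\Delta)$, and Proposition~\ref{p:phi_iso} translates this into $x = x'$.

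The crux of the argument is the exact equality $\alpha(v^2) = \xi$, which genuinely uses the feature $x^2 = 0$ of $A = H^*(S^2\times S^3;\bq)$. For a general odd-dimensional Poincar\'e duality algebra one would only obtain $\alpha(v^2) = \xi + \tau^2 + 2\tau\cdot ss^{-n}1$ for some cocycle $\tau \in (\ata)^{n-1}$, and controlling these correction terms modulo $(\Delta)$ is precisely the obstacle that currently leaves Conjecture~\ref{conj:xi} open in general.
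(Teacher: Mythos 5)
Your backward implication is fine, and so is roughly the first half of your forward argument: the normal form $\alpha(v)=c(x\otimes x)+ss^{-5}1$, the element-level computations $\alpha(v)^2=\xi$ and $\alpha'(v)^2=\xi'$ (using $x^2=0$), the identification of $H^*(C(\xi))$ with $\ata/(\Delta)$ (both of dimension $12$), and the remark that an $\ata$-linear automorphism of the cyclic $\ata$-module $\ata/(\Delta)$ fixing $1$ is the identity, are all correct. The gap is in the final step, ``tracking $[v^2]$'': the element $v^2$ is not a cocycle of $R=(\relmi,D)$, so it has no class to transport along $H^*(\alpha')\circ H^*(\alpha)^{-1}$. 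Indeed $D(v^2)=2\,\Delta\cdot v$, and this is nonzero in general: since $d=0$ on $\ata$, any primitive $v$ of $\Delta$ must involve generators of $Z$, and in the paper's explicit relative model (a legitimate witness of the equivalence) one has $D(u^2)=2\,\Delta u\neq 0$ --- the generator $h$, with $Dh=u^2-2\bigl(z_{61}(1\otimes x)+z_{61}(x\otimes 1)\bigr)+\cdots$, exists precisely because $u^2$ is not closed. The equalities $\alpha(v^2)=\xi$ and $\alpha'(v^2)=\xi'$ are equalities of \emph{elements} in two different target algebras and, absent closedness of $v^2$, carry no cohomological content by themselves.

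The natural repair does not close the gap, and seeing why it fails locates the real content of the proposition. Since $\Delta$ has odd degree, $\Delta v$ is a cocycle of degree $9$, and $H^9(R)\cong H^9(C(\xi))=0$, so $\Delta v=D\eta$ for some $\eta\in R^8$ and $w:=v^2-2\eta$ is closed. But then $H^*(\alpha)[w]=[\xi]-2[\alpha(\eta)]$ and $H^*(\alpha')[w]=[\xi']-2[\alpha'(\eta)]$, where $\alpha(\eta)$ and $\alpha'(\eta)$ are cocycles (one checks $\Delta\cdot\alpha(v)=0$ in $C(\xi)$, all relevant terms landing in the truncated degrees) whose classes in $H^8\cong\bq\cdot[y\otimes xy]$ are a priori unrelated, because $\eta$ itself is not closed in $R$. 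Your argument therefore yields only $[\xi]-[\xi']=2\bigl([\alpha(\eta)]-[\alpha'(\eta)]\bigr)$ in $\ata/(\Delta)$, which is vacuous. Controlling exactly this secondary term is what the paper's proof does by explicit computation: after reducing, via Lemma \ref{eqaa}, to an isomorphism $\psi$ of the explicit relative models restricting to the identity on $\ata$, matching the coefficients of \emph{both} $y\otimes xy$ and $xy\otimes y$ in $D\psi(h)=\psi(Dh)$ gives the two equations $q-(\gamma_4+\gamma_5)=r-2\beta_1$ and $\gamma_4+\gamma_5=2\beta_1$, and only their combination forces $q=r$; your invariant sees, in effect, only the first of these. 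Relatedly, your closing paragraph misidentifies the obstacle: even when $x^2=0$ kills the corrections to $\alpha(v)$, the non-closedness of $v^2$ in the interpolating algebra remains, and it --- not the cocycle $\tau$ --- is what your argument fails to control.
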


To prove Proposition \ref{p:exemple} let us construct the relative CDGA model of the morphism
\[\xymatrix{\ata \ar[r]^{id\oplus 0} & C(\Delta^!)\ar[r]^\pi & C(q,r)     }           \]
where $\pi: C(\Delta^!) \to C(q,r)$ is the canonical projection. 
We will construct a graded vector space $Z$, a differential $D$  and a CDGA quasi-isomorphism
$m: (\relmi_{q,r},D) \to C(q,r)$, such that the following diagram commutes 
\[\xymatrix{\ata\ar[dr] \ar[r]^{id\oplus 0} & C(\Delta^!)\ar[r]^\pi & C(q,r) \\
&(\relmi,D_{q,r}).\ar[ur]_{m}  &}\]

Constructing $Z$ degree by degree up to degree seven, we obtain the generators described below:
{\small
\begin{center}
\begin{tabular}{r|l|c| l}
Degree&$Z_0$ & $D$ & $m$ \\
\hline
4 &$u$ & $D(u)= \Delta:= 1\otimes xy + x\otimes y - y \otimes x -xy\otimes 1$ &$m(u)=s1$ \\
5&$z_5$ & $D(z_5)=u.(1\otimes x)-u.(x\otimes 1)$ & $m(z_5)=0$\\
6&$z_{61}$ & $D(z_{61})=u.(1\otimes y)-u.(y\otimes 1)$ & $m(z_{61})=0$ \\
6&$z_{62}$ & $D(z_{62})=z_5.(1\otimes x) + z_5.(x\otimes1 )$ & $m(z_{62})=0$ \\
7&$z_{71}$ & $D(z_{71})=z_{62} (1\otimes x)-z_{62} (x\otimes 1)$ & $m(z_{71})=0$\\
7&$z_{72}$ & $D(z_{72})=z_{61}(1\otimes x)+z_5(y\otimes1 )-z_5(1\otimes y)$ & $m(z_{72})=0$\\
&&\hspace{12mm}$-z_{61}(x\otimes1 )$&  \\
7&$h$      & $D(h)=u^2-2(z_{61}(1\otimes x)+z_{61}(x\otimes1 ))$& $m(h)=0 $\\
&& \hspace{12mm}$+q (y \otimes xy) + r (xy\otimes y)$ & 
\end{tabular}
\end{center}}

Since for all $C(q,r)$ the relative CDGA model is the same excepting the differential, we will denote by 
 $(\relmi, D_{q,r})$ the relative CDGA model associated to $C(q,r)$.

The following result is useful to determine when two of these algebras are weakly equivalent under $\ata$. The proof is left to the reader.
\begin{lemma}
\label{eqaa}
Let $A$ be a Poincar\'e duality CDGA of formal dimension $n$ odd and $\xi, \xi' \in (A\otimes\nolinebreak A)^{2n-2}$. Let 
 $\xymatrix{\ata\ar@{ >->}[r]& (\relmi_\xi, D_\xi)} \text{ and } \xymatrix{\ata\ar@{ >->}[r]& (\relmi_{\xi'}, D_{\xi'})}$ be relative CDGA models of $C(\xi)$ and $C(\xi')$ respectively.
We have that $C(\xi) \simeq_{\ata} C(\xi')$ if and only if there exists an isomorphism 
$\psi : (\relmi_\xi, D_\xi) \stackrel{\cong}{\longrightarrow} (\relmi_{\xi'}, D_{\xi'}),$ such that  $\psi (a\otimes b)= a\otimes b$, for all $a, b \in A$. 
\end{lemma}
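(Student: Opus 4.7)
The plan is to split the proof into the easy forward direction and the substantive converse, which relies on applying the lifting lemma for relative Sullivan algebras twice and then invoking uniqueness of minimal relative Sullivan models.

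For the easy direction ($\Leftarrow$): given an isomorphism $\psi \colon (\relmi, D_\xi) \to (\relmi, D_{\xi'})$ of $\ata$-CDGAs, I would exhibit the witness for $C(\xi) \eqaa C(\xi')$ by taking the common relative Sullivan algebra of Definition~\ref{d:cxi_cxip} to be $(\relmi, D_\xi)$ itself, with the two quasi-isomorphisms to $C(\xi)$ and $C(\xi')$ being $m_\xi$ and $m_{\xi'} \circ \psi$ respectively. The second map is a quasi-isomorphism since $\psi$ is an isomorphism and $m_{\xi'}$ is a quasi-isomorphism, and commutativity of the diagram under $\ata$ is immediate from $\psi|_{\ata} = \oid$.

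For the converse ($\Rightarrow$): given a witness $\ata \hookrightarrow (\relmi', D)$ with quasi-isomorphisms $\alpha \colon (\relmi', D) \to C(\xi)$ and $\beta \colon (\relmi', D) \to C(\xi')$ under $\ata$, the strategy is to produce $\psi$ by two applications of the standard lifting lemma for relative Sullivan algebras. First, since $\ata \hookrightarrow (\relmi, D_\xi)$ is a relative Sullivan algebra and $\alpha$ is a quasi-isomorphism, I would lift $m_\xi$ along $\alpha$ to obtain a morphism $\phi \colon (\relmi, D_\xi) \to (\relmi', D)$ under $\ata$ with $\alpha \circ \phi \simeq m_\xi$; two-out-of-three then gives that $\phi$ is itself a quasi-isomorphism. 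Second, applying the lifting lemma again, now to the relative Sullivan algebra $\ata \hookrightarrow (\relmi, D_\xi)$ and the quasi-isomorphism $m_{\xi'}$, I would lift the composite $\beta \circ \phi \colon (\relmi, D_\xi) \to C(\xi')$ through $m_{\xi'}$ to produce $\psi \colon (\relmi, D_\xi) \to (\relmi, D_{\xi'})$ under $\ata$; two-out-of-three once more gives that $\psi$ is a quasi-isomorphism.

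The main obstacle is upgrading the quasi-isomorphism $\psi$ to a genuine isomorphism, as the statement requires. This is where the specific form of the relative Sullivan models enters: the lemma will be applied below to resolutions built on a common underlying algebra $\relmi = \ata \otimes \Lambda Z$, with the generators of $Z$ added degree-by-degree precisely to kill cohomology classes, as exhibited in the table preceding the statement. This construction makes both $(\relmi, D_\xi)$ and $(\relmi, D_{\xi'})$ \emph{minimal} relative Sullivan algebras over $\ata$, in the sense that $D_\xi(Z) \subseteq \ata^+ \otimes \Lambda Z + \ata \otimes \Lambda^{\geq 2} Z$, and similarly for $D_{\xi'}$. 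The classical uniqueness theorem for minimal relative Sullivan models then ensures that any $\ata$-CDGA quasi-isomorphism between them is automatically an isomorphism, which applied to $\psi$ concludes the argument.
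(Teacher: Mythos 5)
Your proof is correct, and there is in fact nothing to compare it against: the paper explicitly leaves the proof of Lemma \ref{eqaa} to the reader, and your argument --- the easy direction by taking $(\relmi,D_\xi)$ itself as the witness with maps $m_\xi$ and $m_{\xi'}\circ\psi$, and the converse by two applications of the lifting lemma against quasi-isomorphisms, two-out-of-three, and then the isomorphism theorem for quasi-isomorphisms between minimal relative Sullivan algebras fixing the base (see \cite{FHT}) --- is exactly the standard argument the paper intends. The one subtlety you flag is real and correctly handled: for arbitrary non-minimal relative models the forward implication would fail (an isomorphism under $\ata$ forces the generating spaces to agree degreewise, while extra contractible generators do not change the weak equivalence class), so the lemma must be read for the degree-by-degree models of the paper's table, which are indeed minimal in the sense $D_\xi(Z)\subseteq (\ata)^+\otimes\Lambda Z+\ata\otimes\Lambda^{\geq 2}Z$, and the connectivity hypotheses of the uniqueness theorem hold since $H^0(\ata)=\bq$, $H^1(\ata)=0$ and $Z=Z^{\geq n-1}$ with $n\geq 3$.
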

%
%

Let us prove Proposition \ref{p:exemple}.
\begin{proof}[Proof of Proposition \ref{p:exemple}]
If $x=x'\in H^{n-2}(A)= H^3 (A)$ it is clear by Proposition \ref{p:xi_xip} and Proposition \ref{p:phi_iso}  that $C(x)\eqaa C(x')$. 

For the other implication, first notice that $H^3(A) =\langle [y] \rangle$. By the isomorphism $\Phi$ defined in Proposition \ref{p:phi_iso}, for $q\in \bq$, $q[y] \in H^3 (A)$ is sent to $[[qy\otimes xy]]$ in the quotient $H^{8}(\ata)/(\Delta)$.   A representative of the class $[[y\otimes xy]]\in H^{8}(\ata)/(\Delta)$ is given by $[y\otimes xy]$ in $H^8(\ata)$. Hence, by definition $C(q[y])= C(qy\otimes xy)=C(q,0)$. 

Let $r,q \in \bq$. If $C(q,0) \eqaa C(r,0)$, by Lemma \ref{eqaa} we have an isomorphism of CDGA \[\psi : (\relmi, D_{q,0}) \qi (\relmi, D_{r,0})\] such that $\psi (a\otimes b \otimes 1) = a\otimes b$.

For the sake of conciseness, when there is no ambiguity, both differentials ($D_{q,0}$ and $D_{r,0}$) will be denoted by $D$. For degree reasons  $\psi(u)= \alpha_1 u + \alpha_2 (x\otimes x) $
for some $\alpha_1 , \alpha_2 \in \mathbb{Q}$. We then have
$
D \psi (u) = D (\alpha_1 u + \alpha_2 (x\otimes x) )= \alpha_1 \Delta
$
and
$
\psi (Du) = \psi (\Delta)=\Delta.
$
Since $\psi$ commutes with differentials, we deduce that $\alpha_1=1$. Hence, 
$
\psi (u)= u + \alpha_2 (x\otimes x).
$
Analogously we have that since $z_{61}$ is of degree $6$, 
\begin{equation*}
\psi (z_{61}) = \beta z_{61}  + \beta_1 y\otimes y + \beta_2 u (1\otimes x) + \beta_3 u (x\otimes 1)+ \beta_4 z_{62},
\end{equation*}
for some $\beta_1$, $\beta_2$, $\beta_3$, $\beta_4$ and $\beta_5 \in \mathbb{Q}$. 
On one side
\begin{eqnarray*}
D\psi (z_{61})&=& D ( \beta_1 z_{61} + \beta_2 z_{62} + \beta_3 y\otimes y + \beta_4 u (1\otimes x) + \beta_5 u (x\otimes 1) )\\
&=& \beta u(1\otimes y - y\otimes 1) + (\beta_2 +\beta_3) (x\otimes xy - xy \otimes x) \\
&& +\beta_4 z_5 (1\otimes x + x\otimes 1),
\end{eqnarray*}
and on the other
\begin{eqnarray*}
\psi (Dz_{61}) &=& \psi (u (1\otimes y - y \otimes 1))\\
&=& u(1\otimes y - y\otimes 1) + \alpha_2 (x\otimes xy - xy \otimes x).
\end{eqnarray*}

Then, for $\psi$ to commute with the differentials, we have that $\beta=1$, $\beta_4=0$ and therefore 
\[
\psi (z_{61}) = z_{61} + \beta_1 (y\otimes y) + \beta_2 u (1\otimes x) + \beta_3 u (x\otimes 1)
\]
with $\beta_2 + \beta_3= \alpha_2$. 

Let's study what happens with the element $h\in Z^7$. For degree reasons
\begin{eqnarray*}
\psi(h)&=& \gamma_1 h + \gamma_2 (xy\otimes y) + \gamma_3(y \otimes xy) +\gamma_4 u (1\otimes y) + \gamma_5 u (y\otimes 1) \\
&& +\gamma_6 z_5 (1\otimes x) + \gamma_7 z_5 (x\otimes 1) +\gamma_8 z_{71} + \gamma_9 z_{72}.
\end{eqnarray*}
Simple calculations yield 
\begin{eqnarray*}
D_{q,0} \psi (h)
&=&  \gamma_1 u^2-2\gamma_1z_{61} (1\otimes x + x\otimes 1) + (\gamma_1q - (\gamma_4 +\gamma_5))(y\otimes xy) \\
&&  -(\gamma_4 + \gamma_5) (xy\otimes y)  +(\gamma_6 +\gamma_7) u (x\otimes x) \\
&& +\gamma_8 z_{62} (1\otimes x - x\otimes 1) \\
&&+ \gamma_9 (z_{61} (1\otimes x - x\otimes 1) + z_5 (y\otimes 1 - 1\otimes y))
\end{eqnarray*}
and on the other side
\begin{eqnarray*}
\psi (D_{r,0} (h) ) 
&=& u^2 + 2 (\alpha_2 - (\beta_2+\beta_3))u (x\otimes x) - 2z_{61} (1\otimes x + x\otimes 1)\\ && + (r-2\beta_1)(y\otimes xy) -\beta_1(xy\otimes y).
\end{eqnarray*}
For $\psi$ to commute with the differentials we must have that 
 $\gamma_1 =1$, $q - (\gamma_4 + \gamma_5) = r - 2\beta_1$ and $-(\gamma_4 + \gamma_5)= - 2\beta_1$. From these last equations we deduce that $q-r =(\gamma_4 +\gamma_5)-2\beta_1=0$, hence $q=r$.

\end{proof}

\begin{remark}
As a consequence of this last proposition we see that there are as many different (under $\ata$) CDGAs as elements in $H^3(A)$. An interesting fact to notice is that if we take out the condition "under $\ata$"  we obtain, using the same techniques as in the proof of Proposition \ref{p:exemple} (but this time without imposing that $\psi|_{\ata}= id$), \emph{only two} not weakly equivalent CDGAs: the CDGA $C(0)$ and $C(x)$ for $x\neq 0$. Indeed the CDGA $C(0)$ and the CDGA $C(x)$ are not weakly equivalent when $x\neq 0$. But, as CDGAs (not under $\ata$), $C(x)\simeq C(x')$ if $x\neq 0 \neq x'$.
\end{remark}

\section{Further questions}

The next step to understand the rational model of $F(M,2)$ for a simply connected manifold $M$ of odd dimension would be to prove Conjecture \ref{conj:xi}.  The fact of having a bijection between equivalence classes of CDGAs $C(\xi)$ and the cohomology group $H^{n-2}(M; \bq)$ would suggest (inspired by N. Habbeger's thesis \cite{HAB-PHD}) that the different CDGA structures $C(\xi)$ would be in one-to-one correspondance with the isotopy classes of embeddings  $M \hookrightarrow M\times M$ homotopic to the diagonal embedding.  The interest of this would be to find a geometrical argument that will give us information about the choice of $x\in H^{n-2}(M;\bq) $ such that $C(x)$ is a CDGA model of $F(M,2)$. 

An example of another geometrical argument that could give information about the choice of $x\in H^{n-2} (M;\bq)$ is 
%
 the fact that the inclusion 
\[F(M,2) \hookrightarrow M\times M\]
is $\Sigma_2$-equivariant (by the action of the symmetric group of two letters which acts by permutation of the coordinates). In \cite[section 6.4]{these} it is shown that this fact doesn't allow us to restrict the choice of $x \in H^{n-2}(M;\bq)$. 

To conclude, recall that in Section \ref{model_s2xs3}  we have introduced  the notion of an \emph{untwisted} manifold for which $C(0)$ is a model of the configuration space of two points. We also conjectured in that section that every simply connected closed manifold is untwisted. 
Oppositely, a very interesting fact will be to construct a \emph{twisted}  closed simply connected manifold $M$ of odd dimension (i.e. a manifold such that $C(0)$ is \emph{not} a rational model of $F(M,2)$).

\bibliographystyle{amsplain}
\bibliography{bibliography}

\end{document}